\lstdefinelanguage{Magma}{
  keywords={for, while, if, do, then, else, function, return},
  sensitive=true,
  morecomment=[l]{//},
  morecomment=[s]{/*}{*/},
  morestring=[b]",
}
\bfseries\color{blue},
\itshape\color{green!60!black},
\tiny\color{gray},
\newcommand{\rep}[1]{\langle\!\langle {#1}\rangle\!\rangle}
\newcommand{\FF}{{\mathbb F}}
\newcommand{\leg}[2]{\genfrac{(}{)}{}{}{#1}{#2}}
\def\Fq2{{\mathbb F}_{q^2}}
\def\Fp2{{\mathbb F}_{p^2}}
\def\cC{{\mathcal C}}
\def\cH{{\mathcal H}}
\def\cX{{\mathcal X}}
\def\cJ{{\mathcal J}}
\def\Fq2{{\mathbb F}_{q^2}}
\def\cC{{\mathcal C}}
\def\cH{{\mathcal H}}
\def\cX{{\mathcal X}}
\def\cJ{{\mathcal J}}
\numberwithin{equation}{section}
\theoremstyle{plain}
\newtheorem{thm}{Theorem}
\numberwithin{thm}{subsection}
\newtheorem{lem}[thm]{Lemma}
\newtheorem{cor}[thm]{Corollary}
\newtheorem{pro}[thm]{Proposition}
\newtheorem{notation}[thm]{Notation}
\newtheorem{remark}[thm]{Remark}
\newtheorem{question}[thm]{Question}
\newtheorem{example}[thm]{Example}
\def\blfootnote{\xdef\@thefnmark{}\@footnotetext}
\begin{document}
\title{CM theory, maximal hyperelliptic curves, and Chebyshev polynomials}

\author{Saeed Tafazolian and Jaap Top
   }
\date{}
\address{Departamento de Matem\'{a}tica - Instituto de Matem\'{a}tica, Estat\'{i}stica e Computa\c{c}\~ao Cient\'{i}fica
(IMECC) - Universidade Estadual de Campinas (UNICAMP), Rua S\'{e}rgio Buarque de Holanda, 651, Cidade Universit\'{a}ria,  Zeferino Vaz, Campinas, SP 13083-859, Brazil}
\address{Bernoulli Institute for Mathematics, Computer Science, and Artificial Intelligence,\\
Nijenborgh~9\\9747 AG Groningen\\ the Netherlands}
\email{saeed@unicamp.br}
\email{j.top@rug.nl}
  %\maketitle

%----------------------------------------------------------------------

   \begin{abstract}
This paper studies hyperelliptic curves $\cH_d$ corresponding to $y^2=\varphi_d(x)$ 
over finite fields, with $\varphi_d(x)$ a Chebyshev polynomial. Starting from the case 
where $d=\ell$ is an odd prime number, new cases $(d,q)$ are presented where $\cH_d$ is maximal
over the finite field $\FF_{q^2}$ of cardinality $q^2$. In addition, new conditions ruling out the possibility
that $\cH_d/\FF_{q^2}$ is maximal for given $(d,q)$, are presented.
The arguments involve a mix of
results on slopes of Frobenius, explicit descriptions
of abelian subvarieties of the jacobian of $\cH_d$ with complex multiplication,
 and a technique from the theory of $2$-descent on
jacobians of hyperelliptic curves. In particular, the
method used here to prove maximality in characteristics $p\equiv 1\bmod 4$ for $d\equiv 1\bmod 4$ a prime number, deserves attention, as it differs from earlier maximality arguments for other curves. Using the new results as
well as extensive calculations with Magma, we pose some
questions. A positive answer would completely classify
the pairs $(q,d)$ resulting in maximality.
   \end{abstract}

\maketitle

{\bf\em Keywords:} finite field, maximal curves, hyperelliptic curves, complex multiplication,
 Chebyshev polynomials, Newton polygon, 2-descent.

 \emph{2000 Mathematics Subject Classification}: 11G20, 11M38, 14G15, 14H25, 14K22.

\section{Introduction}\label{intro}

Given $d\in\mathbb{Z}_{>0}$, the $d$-th
Chebyshev polynomial $\Phi_d(x)\in\mathbb{Z}[x]$ is the unique polynomial such that in
$\mathbb{Z}[x,x^{-1}]$ the equality
\[ \Phi_d\left(x+\frac1x\right)=x^d+\frac1{x^d}\]
holds. For a finite field $\FF_q$
of cardinality $q$ and characteristic
$p$, denote the reduction of
$\Phi_d(x)$ modulo $p$ by $\varphi_d(x)\in\FF_p[x]\subseteq\FF_q[x]$.
Immediate consequences of the definition include
\begin{itemize}
\item[{$*$}] $\varphi_d(x)$ is monic and of degree $d$;
\item[{$*$}] $\varphi_d(0)=\left\{
\begin{array}{rl}
0&\textrm{for}\;d\;\textrm{odd},\\
-2&\textrm{for}\;d\equiv 2\bmod 4,\\
2&\textrm{for}\;d\equiv 0\bmod 4;
\end{array}
\right.$
\item[{$*$}] $\varphi_d(-x)=(-1)^d\varphi_d(x)$;
\item[{$*$}] $\varphi_{de}(x)=\varphi_d(\varphi_e(x))$;
\item[$*$] $\varphi_{d+2}(x)=x\cdot\varphi_{d+1}(x)-\varphi_d(x)$;
\item[$*$] the derivative $\varphi'_d(0)=\left\{
\begin{array}{rl}
0&\textrm{for}\;d\;\textrm{even},\\
d&\textrm{for}\;d\equiv 1\bmod 4,\\
-d&\textrm{for}\;d\equiv 3\bmod 4.
\end{array}
\right.$
\end{itemize}
An additional property follows  (with $\beta\colon t\mapsto t+\frac1t$) from commutativity of the diagram
\[    
\begin{array}{ccc}
\mathbb{P}^1 & \stackrel{t\mapsto t^d}{\longrightarrow}& \mathbb{P}^1 \\
{\Big\downarrow}{\scriptstyle \beta} && {\Big\downarrow}{\scriptstyle\beta} \\
\mathbb{P}^1 & \stackrel{t\mapsto\varphi_d(t)}{\longrightarrow}& \mathbb{P}^1:
\end{array}
    \]
\begin{itemize}
    \item[$*$] $\varphi_d(x)\in\FF_q[x]$ is separable $\Longleftrightarrow\;\;d=1\;\;\text{or}\;\;\gcd(q,2d)=1$.
\end{itemize}
In the remainder of this paper we
consider the complete, regular and absolutely irreducible curve $\cH_d$ over $\FF_q$ corresponding to the affine equation
\[ y^2=\varphi_d(x).\]
Here we assume $q$ is odd and
$\varphi_d(x)$ is separable; in other
words $\gcd(q,2d)=1$. This
implies that $\cH_d$ has genus
$g=g(\cH_d)=\lfloor (d-1)/2\rfloor$.
Our aim is to describe the pairs
$(d,q)$ such that $\cH_d$ is maximal
over $\FF_{q^2}$; by definition this
means that $\#\cH_d(\FF_{q^2})$ attains
the Hasse-Weil upper bound for the number of points on a curve over a
finite field, i.e.,
\[ \#\cH_d(\FF_{q^2})=q^2+1+2g(\cH_d)q.\]

\vspace{\baselineskip}
One of the main results presented here is Thm.~\ref{promax}:
it states that if $\ell\equiv p\equiv 1\bmod 4$, then
$\cH_\ell$ is maximal over $\FF_{p^{\ell-1}}$.
The proof is a mixture of rather different tools, namely
permutation polynomials, some theory of complex multiplication,
and $2$-descent in the arithmetic of jacobians of hyperelliptic
curves.

Maximality results for more general $\cH_d$ are discussed as well. In characteristics $\equiv 3\bmod 4$ a complete
characterization for odd $d$ is provided by Prop.~\ref{3mod4}
(the case $2|d$ is discussed in \cite[Thm.~1.5 and \S5]{TT}).
In characteristics $p\equiv 1\bmod 4$, some necessary
conditions for maximality of $\cH_d$ are presented in \S\ref{subsec1mod4}. Computer experiments resulted
in a simple observation, namely that the only examples
where we have seen maximality of $\cH_d$ in characteristic
$p\equiv 1\bmod 4$, is for $d=\ell^n$ a power of a prime
$\ell\equiv 1\bmod 4$ and moreover $p\bmod d$ a generator
of the cyclic group $(\mathbb{Z}/d\mathbb{Z})^\times$.

The text is organized as follows. Section~\ref{prelim}
discusses necessary background material.
In Section~\ref{primedeg} the curves $\cH_d$ are discussed
for $d=\ell\geq 3$ a prime number.
Next, \S\ref{3mod4} contains complete results for odd $d$
and characteristic $\equiv 3\bmod 4$, and \S\ref{subsec1mod4}
presents an approach towards the characteristic $\equiv 1\bmod 4$ case for general $d$, culminating besides various examples
in Question~\ref{mainquestion}. Finally, the Appendix contains and briefly
describes Magma code used in
this paper.

\section{Preliminaries}\label{prelim}
Maximality of any curve $\cC/\FF_{q^2}$ of genus $g$ is equivalent to the $q^2$-Frobenius endomorphism
$F$ of the jacobian $\cJ(\cC)$ being equal to $[-q]$, the multiplication by $-q$.
In this case $\cJ(\cC)(\FF_{q^2})=\textrm{Ker}(F-\textit{id})=\textrm{Ker}([q+1])\cong 
\left(\mathbb{Z}/(q+1)\mathbb{Z}\right)^{2g}$. So in particular when $q$ is odd and the maximal curve $\cC$ is
a hyperelliptic curve obtained from an equation $y^2=f(x)$ with $f(x)$ separable and of odd degree, then $f(x)$ splits completely in $\FF_{q^2}[x]$: indeed, if $f(\alpha)=0$, then the divisor
$(\alpha,0)-\infty$ yields a point of order $2$ in $\cJ(\cC)$, hence $\alpha\in\FF_{q^2}$.
Maximality of $\cC/\FF_{q^2}$ is also equivalent to the statement that the characteristic polynomial of $F\in\textrm{End}(\cJ(\cC))$ equals $(X+q)^{2g}$. In this case the slopes of the Newton polygon
of this polynomial consist of the singleton set $\{\frac12\}$. Recall that for a polynomial
$a_0X^n+a_{1}X^{n-1}+\ldots+a_{n-1}X+a_n\in\mathbb{Z}[X]$ and $q$ a power of the prime $p$,
these slopes are the slopes of the lower boundary of the convex hull in $\mathbb{R}^2$ of
the points $\left(j, \textrm{ord}_p(a_j)/\textrm{ord}_p(q)\right)$. The vertical scaling factor
$\textrm{ord}_p(q)$ here makes the slopes of Frobenius independent of  extensions of the finite field and corresponding Frobenius. In particular, as will be used below, if $\cC/\FF_p$ is maximal over
some extension $\FF_{p^{2e}}$ then $\frac12$ is the only slope of the $p$-Frobenius on $\cJ(\cC)$.

Regarding the polynomials $\varphi_d(x)$, a well-known property is that under the condition
$\gcd(p^{2n}-1,d)=1$, the map $t\mapsto \varphi_d(t)$ regarded as a map $\varphi_d\colon \FF_{p^n}\to\FF_{p^n}$ is a permutation. At least in the case $d$ is odd, this was
already shown in 1896 by L.E.~Dickson, see \cite[\S54]{Dickson} (note the misprint
in the statement of the result, writing $p^{2n-1}$ instead of $p^{2n}-1$ as is clearly
used in the proof on his next page). Dickson's result in particular implies that
with $m$ the order of $p\bmod d\in\left(\mathbb{Z}/d\mathbb{Z}\right)^\times$ one
has $\#\cH_d(\FF_{p^n})=p^n+1$ for  every $n<m\cdot (3-\gcd(m,2))$. As a consequence, for
these $n$ the coefficient of $X^{2g-n}$ in the characteristic polynomial of the $p$-Frobenius
is $0$.

A final observation that will be important for us, concerns the case that $d=\ell\geq 3$ is
a prime number. As explained in \cite[Proposition~4]{TTV}, the jacobian $\cJ(\cH_\ell)$
contains the CM field $K_\ell:=\mathbb{Q}(i,\zeta_\ell+\zeta_\ell^{-1})$ in its endomorphism algebra;
here $\zeta_\ell$ denotes a primitive $\ell$-th root of unity. The action of $K_\ell$ on
regular translation-invariant $1$-forms on $\cJ(\cH_\ell)$ (the jacobian now considered as an abelian variety
in characteristic $0$) defines a CM type $\Phi_\ell$.
Since $K_\ell$ is a Galois extension of $\mathbb{Q}$, this CM type can be described as
a subset of $\textrm{Gal}(K_\ell/\mathbb{Q})$. Explicitly, $\mathbb{Q}(i,\zeta_\ell)=\mathbb{Q}(\zeta_{4\ell})$ has Galois group
$\textrm{Gal}(\mathbb{Q}(\zeta_{4\ell})/\mathbb{Q})\cong\left(\mathbb{Z}/4\ell\mathbb{Z}\right)^\times\cong \left(\mathbb{Z}/4\mathbb{Z}\right)^\times \times \left(\mathbb{Z}/\ell\mathbb{Z}\right)^\times$.
The subfield $K_\ell$ corresponds to the order two subgroup
$\langle(1,-1)\rangle\subset
\left(\mathbb{Z}/4\mathbb{Z}\right)^\times \times \left(\mathbb{Z}/\ell\mathbb{Z}\right)^\times$.
Hence $\textrm{Gal}(K_\ell/\mathbb{Q})
\cong \left(\mathbb{Z}/4\mathbb{Z}\right)^\times \times\FF_\ell^\times/(\pm 1)$.
As a subset of this, the CM type $\Phi_\ell$ is given as
\[\Phi_\ell=\left\{(1,\pm 1),\,(-1,\pm 2),\,(1,\pm 3),\,\ldots,\,\left((-1)^{(\ell-3)/2},(\ell\pm 1)/2\right)\right\}.\]
Based on work of Shimura and Taniyama
one can describe, as e.g., shown in
\cite[Theorem~3.1]{Blake}, at any prime $p\not\in\{2,\ell\}$ the slopes (of
the Newton polygon of the characteristic polynomial) of Frobenius
on the reduction modulo $p$ of $\cJ(\cH_\ell)$.
Explicitly, the decomposition group
$D_p\subset \textrm{Gal}(K_\ell/\mathbb{Q})$ at the prime $p$
equals the cyclic group
\[D_p = \langle (p\bmod 4, \pm p)\rangle\subset \left(\mathbb{Z}/4\mathbb{Z}\right)^\times \times\FF_\ell^\times/(\pm 1).\]
In terms of this, the slopes of the $p$-Frobenius on $\cJ(\cH_\ell)$ are the fractions
\[ \displaystyle \frac{\#\left(\Phi_\ell\cap D_p\cdot \tau\right)}{\#D_p}\;\;\;\textrm{taken with multiplicity}\;\;\;[\textrm{Gal}(K_\ell/\mathbb{Q}):D_p],\]
with $\tau$ ranging over a set of representatives of the cosets $D_p\backslash \textrm{Gal}(K_\ell/\mathbb{Q})$. A straightforward example illustrating how this
is used, is provided below in Prop.~\ref{3,1}. We note in passing that CM results for
$\cJ(\cH_\ell)$ were also used by Sugiyama, 
see in particular \cite[Example~5.3, Corollary~1.2]{Sugiyama}.
The next session includes, notably in the results of \S\ref{1,1}, additional applications of this kind of argument. It is also used extensively in the calculations
of \S\ref{subsec1mod4}.

\section{Prime degree}\label{primedeg}
Let $\ell$ be an odd prime number. We discuss the curve $\cH_\ell$ over finite fields of
characteristic $p\not\in\{2,\ell\}$. Its genus is $g:=g(\cH_\ell)=(\ell-1)/2$. The tools for this that were introduced in Section~\ref{prelim}
make it natural to distinguish cases according to the possibilities of the pair $\ell\bmod 4,\,p\bmod 4$.
In  the discussion an important role is played by the CM type $\Phi_\ell$. 
\subsection{Case \texorpdfstring{$\ell\equiv 3\bmod 4,\;p\equiv 1\bmod 4$}{ell=3 mod 4, p=1 mod 4}}
\begin{pro}\label{3,1}
 If $\ell\equiv 3\bmod 4$ and $p \equiv 1\bmod 4$, then $\cH_\ell$
is not maximal over any finite field of characteristic $p$.
\end{pro}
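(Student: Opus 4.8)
The plan is to use the complex-multiplication description of $\cJ(\cH_\ell)$ recalled in \S\ref{prelim}, together with the observation recorded there that maximality of a curve $\cC/\FF_p$ over \emph{some} extension $\FF_{p^{2e}}$ forces $\tfrac12$ to be the only slope of the Newton polygon of the $p$-Frobenius on $\cJ(\cC)$. Since $\varphi_\ell(x)\in\FF_p[x]$, the curve $\cH_\ell$ is defined over $\FF_p$, so every finite field of characteristic $p$ over which it could conceivably be maximal is of the form $\FF_{p^{2e}}$. Hence it suffices to show that the Newton polygon of the $p$-Frobenius on $\cJ(\cH_\ell)$ has no slope equal to $\tfrac12$.

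First I would pin down the decomposition group. Because $p\equiv 1\bmod 4$, the generator $(p\bmod 4,\pm p)$ of $D_p$ has trivial first coordinate, so $D_p\subseteq\{1\}\times\FF_\ell^\times/(\pm 1)$. Therefore $\#D_p$ divides $\#\bigl(\FF_\ell^\times/(\pm 1)\bigr)=(\ell-1)/2$, and since $\ell\equiv 3\bmod 4$ this integer is odd; thus $\#D_p$ is odd. By the slope formula quoted in \S\ref{prelim}, every slope of the $p$-Frobenius on $\cJ(\cH_\ell)$ equals $\#(\Phi_\ell\cap D_p\tau)/\#D_p$ for an appropriate coset representative $\tau$. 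If such a fraction were equal to $\tfrac12$, then $2\,\#(\Phi_\ell\cap D_p\tau)=\#D_p$, forcing $\#D_p$ to be even — a contradiction. Hence $\tfrac12$ does not occur as a slope at all, which contradicts the necessary condition from the previous paragraph, and the proposition follows.

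I do not expect a genuine obstacle here: once the CM data of \S\ref{prelim} (the field $K_\ell$, the CM type $\Phi_\ell$, the decomposition group $D_p$, and the resulting Shimura--Taniyama slope formula) is granted, the argument is pure bookkeeping about parities — and in fact only the parity of $\#D_p$ is needed, so even the explicit shape of $\Phi_\ell$ is not used. The only points deserving a moment of care are that the slope formula is applied to the correct object, namely the characteristic-zero abelian variety $\cJ(\cH_\ell)$ with its CM structure and its reduction modulo $p$, and that ``$\cH_\ell$ maximal over some $\FF_{q^2}$ of characteristic $p$'' is correctly translated into the slope-$\tfrac12$ statement for the $p$-Frobenius; both of these are already set up in \S\ref{prelim}.
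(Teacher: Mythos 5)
Your proposal is correct and follows essentially the same route as the paper: identify $D_p\subseteq\{1\}\times\FF_\ell^\times/(\pm 1)$ from $p\equiv 1\bmod 4$, deduce that $\#D_p$ is odd because $(\ell-1)/2$ is odd, and conclude that no slope $\tfrac12$ can occur, contradicting the necessary condition for maximality. The extra care you take in translating ``maximal over some $\FF_{q^2}$'' into the slope-$\tfrac12$ condition for the $p$-Frobenius is exactly the setup already provided in \S\ref{prelim}.
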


\begin{proof}
With notations as introduced
in Section~\ref{prelim}, the condition $p\equiv 1\bmod 4$ implies that in $\textrm{Gal}(K_\ell/\mathbb{Q})\cong 
\left(\mathbb{Z}/4\mathbb{Z}\right)^\times \times\FF_\ell^\times/(\pm 1)$, the decomposition group 
$D_{p}= \langle (1, \pm p)   \rangle$.
 The condition $\ell\equiv 3\bmod 4$
 implies that $\# D_p$ is odd. Hence for any $\tau\in \textrm{Gal}(K_\ell/\mathbb{Q})$, the corresponding slope 
$\frac{\#(\Phi_\ell\,\cap\,D_p\cdot \tau)} {\# D_p}$ differs from $1/2$.
This violates maximality of $\cH_\ell$ in characteristic $p$.
\end{proof}
\subsection{Case \texorpdfstring{$\ell\equiv 3\bmod 4,\; p\equiv 3\bmod 4$}{ell=3 mod 4, p=3 mod 4}}\label{3,3}
In this situation we have the following result.
\begin{pro}\label{pro3,3}
 Let $\ell\equiv 3\bmod 4$ be a prime number. For every prime $p\neq \ell$ with $p \equiv 3\bmod 4$, the powers $q=p^n$ such that $\cH_\ell$
is maximal over $\FF_{q^2}$ are given by
$n=km$ with $m$ the order of $\pm p\bmod\ell$ in the group $\FF_\ell^\times/(\pm 1)$ and $k$ odd.
\end{pro}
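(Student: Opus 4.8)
The plan is to exploit the CM description of the slopes of the $p$-Frobenius on $\cJ(\cH_\ell)$ recalled in Section~\ref{prelim}, together with the remark there that maximality of $\cH_\ell/\FF_{q^2}$ with $q=p^n$ is equivalent to the characteristic polynomial of the $p^{2n}$-Frobenius being $(X+p^n)^{2g}$, equivalently (using the scaling-invariance of slopes) the $p$-Frobenius on $\cJ(\cH_\ell)$ being supersingular, i.e.\ having $\tfrac12$ as its only slope, \emph{and} the characteristic polynomial of the $p^{2n}$-Frobenius being a perfect $2g$-th power of $(X+p^n)$. So the argument splits into two halves: first, determine exactly when the $p$-Frobenius is supersingular; second, among those $p$, determine for which exponents $n$ the $p^{2n}$-Frobenius actually equals $[-p^n]$ rather than merely being supersingular.

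For the first half I would compute $D_p$. Since $p\equiv 3\bmod 4$, we have $p\bmod 4=-1$, so $D_p=\langle(-1,\pm p)\rangle\subset(\bZ/4\bZ)^\times\times\FF_\ell^\times/(\pm1)$. Writing $m$ for the order of $\pm p$ in $\FF_\ell^\times/(\pm1)$, the order of $(-1,\pm p)$ is $\mathrm{lcm}(2,m)$; because $\ell\equiv3\bmod4$ the group $\FF_\ell^\times/(\pm1)$ has odd order $(\ell-1)/2$, so $m$ is odd and $\#D_p=2m$ is even — a necessary condition for a slope to equal $\tfrac12$. I would then show that for \emph{every} coset representative $\tau$, $\#(\Phi_\ell\cap D_p\tau)=m$, so that all slopes are $\tfrac12$; the cleanest way is to observe that the nontrivial element $(1,-1)\in\mathrm{Gal}(K_\ell/\bQ)$ (which is not in $\Phi_\ell$ coming from $\Phi_\ell$... rather: complex conjugation is $\rho=(-1,1)$ here, acting as $s\mapsto \ell-\text{stuff}$) together with $\Phi_\ell\cup\rho\Phi_\ell=\mathrm{Gal}(K_\ell/\bQ)$ and $\Phi_\ell\cap\rho\Phi_\ell=\emptyset$. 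Since $\rho=(-1,1)=( -1,\pm p)^m\in D_p$ (as $m$ is odd and $(\pm p)^m=\pm1$), each coset $D_p\tau$ is stable under $\rho$ and is partitioned into $m$ pairs $\{x,\rho x\}$, exactly one member of each pair lying in $\Phi_\ell$; hence the count is $m$ and the slope is $m/(2m)=\tfrac12$. Conversely, if $p\equiv1\bmod4$ one gets $\#D_p=m$ odd (Prop.~\ref{3,1}), so among characteristic $p\equiv3\bmod4$ this is the complete story: the $p$-Frobenius is supersingular for \emph{all} such $p$, no condition on $n$ yet.

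For the second half: the $p^{2n}$-Frobenius on $\cJ(\cH_\ell)$ is $\pi^{2n}$ for $\pi$ a Weil $p$-number with the above CM type; maximality over $\FF_{p^{2n}}$ means $\pi^{2n}=-p^n$ in $K_\ell$ (equivalently in every factor). Writing each conjugate of $\pi$ as $\sqrt{p}\,\zeta$ with $\zeta$ a root of unity (legitimate since all slopes are $\tfrac12$, so $\pi\bar\pi=p$ forces $|\pi|=\sqrt p$ archimedean and $\pi/\sqrt p$ is an algebraic integer of absolute value $1$ at all places, hence a root of unity by Kronecker), maximality becomes $\zeta^{2n}=(-1)^n$ for each conjugate, i.e.\ $n$ is such that $\zeta^{2n}=(-1)^n$. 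I expect the order of the relevant root of unity to be governed precisely by $m$: the field generated is related to $K_\ell$ and the Frobenius has order $2m$ on the CM factor, so $\pi^{2m}=\pm p^m$ and a short Gauss-sum / Stickelberger-type computation (or simply tracking the action of $D_p=\langle\sigma\rangle$, $\sigma=(-1,\pm p)$ of order $2m$) pins down the sign. The upshot should be that $\pi^{2n}=-p^n$ holds iff $n=km$ with $k$ odd: for $n=m$ one checks $\pi^{2m}=-p^m$ directly (the sign $(-1)$ coming from $\rho\in D_p$ being the $m$-th power $\sigma^m$, which realizes complex conjugation, so $\sigma^m$ sends $\pi\mapsto\bar\pi=p/\pi$, giving $\pi^{m}\cdot\text{(something)} $...), then $n=km$ gives $\pi^{2km}=(-p^m)^k=(-1)^k p^{km}$, which is $-p^n$ exactly when $k$ is odd; and if $m\nmid n$ one shows $\pi^{2n}$ is not even a rational integer.

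**Main obstacle.** The delicate point is the second half: determining the exact order of $\pi/\sqrt p$ as a root of unity and, in particular, nailing the \emph{sign} in $\pi^{2m}=\pm p^m$. Getting this sign right is what forces the parity condition ``$k$ odd'' (rather than, say, all multiples of $m$, or all even multiples). I would handle it by working in a single CM factor: the $p$-Frobenius there is a root of unity times $\sqrt p$ whose Galois-theoretic order is controlled by $\#D_p=2m$ and by where the CM type $\Phi_\ell$ sits relative to $D_p$; concretely, $\pi^{2}/p$ generates a cyclic group on which $\sigma$ acts, and $\sigma^m=\rho$ (complex conjugation) acts by inversion, which combined with $\pi\bar\pi=p$ gives the relation $(\pi^2/p)^m=\pi^{2m}/p^m$ equal to its own conjugate, hence $=\pm1$; the value $-1$ is then read off from the fact that for the \emph{smallest} field over which $\cH_\ell$ could be maximal it must be maximal and not minimal, or alternatively from a direct Gauss-sum evaluation of $\pi^m$. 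Everything else — the slope computation, the pairing argument via $\rho$, and the bookkeeping ``$n=km$, $k$ odd'' — is routine once this sign is fixed. One should also double-check the boundary behaviour when $m=1$ (i.e.\ $p\equiv-1\bmod\ell$), where the curve $\cH_\ell$ is already maximal over $\FF_{p^2}$ and the statement reads ``$n$ odd'', consistent with $\cH_\ell$ being maximal over $\FF_{p^{2}}$ but minimal over $\FF_{p^{4}}$.
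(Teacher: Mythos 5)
Your first half is correct and is in fact a clean, self-contained version of what the paper imports from references: since $\ell\equiv 3\bmod 4$ the order $m$ of $\pm p$ in $\FF_\ell^\times/(\pm 1)$ is odd, $\sigma=(-1,\pm p)$ has order $2m$, $\sigma^m=(-1,\pm 1)=\rho$ is complex conjugation, and pairing each coset $D_p\tau$ into orbits $\{x,\rho x\}$ with exactly one member in $\Phi_\ell$ gives slope $\tfrac12$ everywhere. But this only proves supersingularity, and supersingularity is invariant under quadratic twist: the curve $cy^2=\varphi_\ell(x)$ with $c$ a non-square has the same CM type, the same decomposition group and the same slopes, yet it is \emph{minimal} exactly where $\cH_\ell$ is maximal. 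So no amount of slope or coset bookkeeping can determine the sign in $\pi^{2m}=\pm p^m$; some input specific to the actual equation $y^2=\varphi_\ell(x)$ is unavoidable. Your two suggestions for fixing the sign do not close this gap: ``for the smallest field over which $\cH_\ell$ could be maximal it must be maximal and not minimal'' is circular (if the sign were $+$, then $u=\pi^2/p$ would have odd order dividing $m$, so $u^n=-1$ is impossible and the curve is never maximal over any field --- a priori a possible outcome that must be excluded), and the ``direct Gauss-sum evaluation of $\pi^m$'' is precisely the hard computation you would need to carry out and do not. The paper sidesteps this entirely by quoting \cite[Theorem~1.6]{TT}, which asserts maximality of $\cH_\ell/\FF_{q^2}$ for $q=p^m\equiv -1$ or $1+2\ell\bmod 4\ell$ outright; that citation is the missing ingredient.

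There is a second gap in your converse direction. Even granting $\pi^{2m}=-p^m$, you still need that maximality over $\FF_{p^{2n}}$ forces $m\mid n$; you assert ``if $m\nmid n$ one shows $\pi^{2n}$ is not even a rational integer'' without proof, and this does not follow from $\pi^{2m}=-p^m$ alone, since $u=\pi^2/p$ could a priori have order $2m'$ for a proper divisor $m'$ of $m$, in which case maximality would occur for more exponents than claimed. The paper obtains $m\mid n$ from \cite[Theorem~3.6]{GSJ} (essentially: maximality forces $\varphi_\ell$ to split over $\FF_{q^2}$, hence $\ell\mid q^2-1$), and obtains ``$k$ odd'' from the observation that the $p^{2km}$-Frobenius equals $[-p^m]^k$, which is $[-p^{km}]$ iff $k$ is odd --- an argument you essentially also have. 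In summary: your supersingularity computation is a nice alternative to part of the setup, but the proposal as written does not prove the proposition; both the sign determination and the divisibility $m\mid n$ require the external inputs the paper cites or an equivalent explicit computation.
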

\begin{proof}
Take $\ell, p,m$ as in the formulation
of the theorem.
Since the group $\FF_\ell^\times/(\pm 1)$ has odd order, also $m$ is odd.
By definition $q:=p^m\equiv \pm 1\bmod \ell$, and since $m$ is odd, $q\equiv -1\bmod 4$.
As shown in \cite[Theorem~1.6]{TT}, this implies that $\cH_\ell$ is maximal over $\FF_{q^2}$.

We claim that the finite fields $\FF_{p^{2n}}$ over which $\cH_\ell$ is maximal, are
precisely the ones with $n$ an odd multiple of $m$. Indeed, maximality over $\FF_{p^{2m}}$
means that the $p^{2m}$-Frobenius $F$ on $\cJ(\cH_\ell)$ equals $[-p^m]$.
So if $n=km$ with odd $k$, then the $p^{2n}$-Frobenius $F^k$ equals $[-p^m]^k=[-p^n]$,
implying that $\cH_\ell$ is maximal over $\FF_{p^{2n}}$.
Vice versa, maximality over $\FF_{p^{2n}}$ for some $n$ shows, using the same argument
and the fact that $m$ is odd, that $\cH_\ell$ is maximal over $\FF_{p^{2nm}}$.
The Frobenius in this case is $F^n=[-p^m]^n$, and since it has to be equal to $[-p^{mn}]$
we conclude that $n$ is odd. Now \cite[Theorem~3.6, (\textit{ii})$\Rightarrow$(\textit{iii})]{GSJ}
applied to $q=p^n$ shows $p^n\equiv \pm 1\bmod \ell$ and therefore $n$ is a multiple of
the order $m$ of $\pm p\bmod \ell$
in the group $\FF_\ell^\times/(\pm 1)$. 
This completes the proof.
\end{proof}

Reformulating Proposition~\ref{pro3,3}, in
the given situation the powers $q$ of $p$ such that $\cH_\ell$ is maximal over $\FF_{q^2}$,
are precisely those satisfying the two conditions $q\equiv 3\bmod 4$ and $q\equiv \pm 1 \bmod \ell$.

\subsection{Case \texorpdfstring{$\ell\equiv 1\bmod 4,\;p\equiv 3\bmod 4$}{ell=1 mod 4, p=3 mod 4}}\label{1,3}
\begin{pro}\label{pro1,3}
Suppose $\ell\equiv 1\bmod 4$ is a prime number.
A necessary condition for maximality of
$\cH_\ell$ over some finite field of characteristic $p\equiv 3\bmod 4$, is that
the order $m$ of $\pm p\bmod \ell\in\FF_\ell^\times/(\pm 1)$ is odd.

In case $m$ is odd and $q=p^n$, the curve
$\cH_\ell$ is maximal over $\FF_{q^2}$ precisely when
$n$ is an odd multiple of $m$.
\end{pro}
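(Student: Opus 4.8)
The plan is to follow the same CM/slopes pattern used in the proof of Proposition~\ref{3,1} and~\ref{pro3,3}, but now tracking carefully what happens when $\ell\equiv 1\bmod 4$. Recall that for $p\equiv 3\bmod 4$ the decomposition group inside $\textrm{Gal}(K_\ell/\mathbb{Q})\cong \left(\mathbb{Z}/4\mathbb{Z}\right)^\times\times\FF_\ell^\times/(\pm 1)$ is $D_p=\langle(-1,\pm p)\rangle$, whose order equals $2m$ if $m$ is odd (because $(-1,\pm p)^m=(-1,\pm 1)=(-1,1)$ has order $2$) and equals $m$ if $m$ is even. For maximality over some field of characteristic $p$ we need, by the Newton polygon criterion recalled in Section~\ref{prelim}, that every slope $\#(\Phi_\ell\cap D_p\tau)/\#D_p$ equals $1/2$; in particular $\#D_p$ must be even. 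If $m$ is even this forces a closer look, but then $D_p=\langle(-1,\pm p)\rangle$ has $(-1,\pm p)^{m/2}$ of order two only when $m/2$ is such that $(\pm p)^{m/2}=\pm 1$, which by minimality of $m$ is impossible; so the element of order $2$ in $D_p$ is $(1,\pm p^{m/2})$ with $p^{m/2}\not\equiv\pm 1$. One then checks that $\Phi_\ell$, being a CM type (so $\Phi_\ell$ and its complex conjugate $\overline{\Phi_\ell}=(-1,1)\cdot\Phi_\ell$ partition the Galois group), interacts with the cosets of $D_p$ in a way that makes $\#(\Phi_\ell\cap D_p\tau)/\#D_p=1/2$ fail for at least one $\tau$. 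This gives the necessary condition that $m$ is odd.

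For the converse direction, assume $m$ is odd. Then $q_0:=p^m$ satisfies $q_0\equiv\pm1\bmod\ell$ and, since $m$ is odd and $p\equiv 3\bmod 4$, also $q_0\equiv 3\bmod 4$. By \cite[Theorem~1.6]{TT} (exactly as invoked in the proof of Proposition~\ref{pro3,3}) the curve $\cH_\ell$ is maximal over $\FF_{q_0^2}$, i.e.\ the $q_0^2$-Frobenius $F$ on $\cJ(\cH_\ell)$ equals $[-q_0]=[-p^m]$. The propagation argument is then identical to the one in Proposition~\ref{pro3,3}: if $n=km$ with $k$ odd, then the $p^{2n}$-Frobenius is $F^k=[-p^m]^k=[-p^{mk}]=[-p^n]$, so $\cH_\ell$ is maximal over $\FF_{p^{2n}}$; and conversely, if $\cH_\ell$ is maximal over $\FF_{p^{2n}}$, then (using that $m$ is odd, so maximality over $\FF_{p^{2m}}$ remains valid and we may pass to $\FF_{p^{2nm}}$) one has $F^n=[-p^m]^n$ must equal $[-p^{nm}]$, forcing $n$ odd, and then \cite[Theorem~3.6, (\textit{ii})$\Rightarrow$(\textit{iii})]{GSJ} applied to $q=p^n$ gives $p^n\equiv\pm1\bmod\ell$, hence $m\mid n$. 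Combining, $n$ is an odd multiple of $m$.

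The main obstacle is the necessity part, specifically showing that when $m$ is even no slope equals $1/2$. Unlike the $\ell\equiv 3\bmod 4$ case, where $\#D_p$ odd immediately kills $1/2$, here when $m$ is even $\#D_p$ can be even, so one genuinely has to compute the intersection numbers $\#(\Phi_\ell\cap D_p\tau)$ coset by coset and argue that no coset gives exactly half. The cleanest route is probably to observe that the element $(-1,1)$ of $\textrm{Gal}(K_\ell/\mathbb{Q})$ (complex conjugation) does \emph{not} lie in $D_p$ when $m$ is even — indeed $D_p=\langle(-1,\pm p)\rangle$ contains $(-1,1)$ only if some odd power of $(-1,\pm p)$ equals $(-1,1)$, i.e.\ $(\pm p)^{\text{odd}}=\pm 1$, forcing $m$ odd. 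Since complex conjugation swaps $\Phi_\ell$ with its complement, the condition "$1/2$ is the only slope" is equivalent to $D_p$ being stable under multiplication by $(-1,1)$ up to the CM-type symmetry; when $(-1,1)\notin D_p$ and $\#D_p$ is even this equivalence breaks, and a short counting argument — partition $\textrm{Gal}(K_\ell/\mathbb{Q})$ into $D_p$-cosets, pair each coset $C$ with $(-1,1)C$, and note these are distinct cosets whose $\Phi_\ell$-intersections sum to $\#D_p$ rather than each being $\#D_p/2$ — produces a slope $\ne 1/2$. I would flag this counting step as the part requiring the most care, since it is exactly where the parity of $m$ enters nontrivially.
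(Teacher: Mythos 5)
Your treatment of the second half of the statement is fine and matches the paper: assuming $m$ odd, $q_0:=p^m$ satisfies $q_0\equiv 3\bmod 4$ and $q_0\equiv\pm1\bmod\ell$, so \cite[Thm.~1.6]{TT} gives maximality over $\FF_{q_0^2}$, and the propagation/descent argument characterizing the admissible $n$ as the odd multiples of $m$ is the same as in Prop.~\ref{pro3,3}. The genuine gap is in the necessity of ``$m$ odd''. Your counting argument does not work as stated: since $(-1,1)$ is complex conjugation and $\Phi_\ell\sqcup(-1,1)\Phi_\ell$ is all of $\mathrm{Gal}(K_\ell/\mathbb{Q})$, the identity $\#(\Phi_\ell\cap C)+\#\bigl(\Phi_\ell\cap(-1,1)C\bigr)=\#D_p$ holds for \emph{every} coset $C$, whether or not $(-1,1)\in D_p$, and it is perfectly compatible with each summand equalling $\#D_p/2$ — indeed in the maximal cases both summands do equal $\#D_p/2$. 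So no slope $\neq 1/2$ is produced. Proving ``$m$ even $\Rightarrow$ some slope $\neq 1/2$'' would require a genuine coset-by-coset analysis of the sort that Prop.~\ref{promin} needs for a single small case, and the analogous assertion for $p\equiv 1\bmod 4$ is left open in the paper (Question~\ref{prime1mod4case}); it is not a step one can wave at, and it is not even clear it holds in the form you need (all Newton slopes equal to $1/2$ is strictly weaker than supersingularity for $g\geq 3$). A smaller slip: the order-two element of $D_p$ is $\bigl((-1)^{m/2},\pm p^{m/2}\bigr)$, which equals $(-1,\pm p^{m/2})$ when $m\equiv 2\bmod 4$, not always $(1,\pm p^{m/2})$.

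The paper proves necessity by an entirely different route that bypasses the slope combinatorics: maximality of $\cH_\ell/\FF_{q^2}$ forces $\varphi_\ell$ to split in $\FF_{q^2}[x]$, hence $\ell\mid q^2-1$ by \cite[Lemma~3.5]{GSJ}, whence \cite[Thm.~3.2]{GSJ} gives an $\FF_{q^2}$-isogeny $\cJ(\cX)\sim\cJ(\cH_\ell)\times\cJ(\cH_\ell)\times E$ with $\cX\colon y^2=x^{2\ell+1}+x$ and $E\colon y^2=x^3+x$. Since $p\equiv 3\bmod 4$ the curve $E$ is supersingular, so maximality of $\cH_\ell$ makes $\cJ(\cX)$ supersingular in the strong sense (Frobenius eigenvalues are roots of unity times $\sqrt p$), and then \cite[Thm.~2]{KW} yields that $m$ is odd. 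To repair your proof, either establish the slope claim for all even $m$, or switch to this supersingularity argument.
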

\begin{proof} Consider primes $\ell, p$ satisfying $\ell\equiv 1\bmod 4$ and $p\equiv 3\bmod 4$.
The arguments presented below for this case are partially similar
to those already used in \S\ref{3,3}. 

Suppose $q=p^n$ is such that $\cH_\ell/\FF_{q^2}$ is maximal. As explained in Section~\ref{prelim}, this implies in
particular that $\varphi_\ell(x)\in\FF_{q^2}[x]$
splits completely. With $\zeta_{4\ell}\in\overline{\FF}_p$ a
primitive $4\ell$-th root of unity,
this means that $\FF_p(\zeta_{4\ell}+\zeta_{4\ell}^{-1})\subseteq \FF_{q^2}$.
Since clearly $\FF_{p^2}\subseteq\FF_{q^2}$,
then \cite[Lemma~3.5]{GSJ} yields
\[
\zeta_\ell\in \FF_{p^2}(\zeta_{4\ell}+\zeta_{4\ell}^{-1})\subseteq\FF_{q^2}
\]
and therefore $\ell|(q^2-1)$.
Using \cite[Theorem~3.2]{GSJ}, the latter
divisibility gives rise to an over
$\FF_{q^2}$ defined isogeny
\[
\cJ(\cX)\sim \cJ(\cH_\ell)\times \cJ(\cH_\ell)\times E
\]
where $\cX$ denotes the smooth projective curve
corresponding to $y^2=x^{2\ell+1}+x$ and
$E$ is the elliptic curve given by $y^2=x^3+x$.
Maximality of $\cH_\ell/\FF_{q^2}$ together
with the observation that since $p\equiv 3\bmod 4$,
we have that $E$ is supersingular,
now shows that $\cX$ is supersingular, i.e.,
any zero of the characteristic polynomial
of the $p$-Frobenius on $\cJ(\cX)$ is of the
form $\zeta\sqrt{p}$ with $\zeta$ a root
of unity. By \cite[Theorem~2]{KW}, one
concludes that the order $m$ of $p\in\FF_\ell^\times/(\pm 1)$ is odd.
This means $\#D_p\equiv 2\bmod 4$,
and $p^m\equiv -1$ or $1+2\ell\bmod 4\ell$.
Now \cite[Theorem~3.6, (\textit{iii})$\Rightarrow$(\textit{ii})]{GSJ} shows
that $\cH_\ell/\FF_{p^{2m}}$ is maximal.
Maximality over odd degree extensions of this
field then holds as well.

As $m$ is odd, maximality of $\cH_\ell/\FF_{q^2}$ implies that $\cH_\ell$ is maximal
over $\FF_{p^{2nm}}$. Viewing the latter field
as an extension of $\FF_{p^{2m}}$ then reveals
as in \S\ref{3,3} that $n$ is odd.
We have $p^{2n}=q^2\equiv 1\bmod \ell$,
hence $p^n\equiv \pm 1\bmod \ell$ which shows
that $n$ is an odd multiple of the order
$m$ of $p\in\FF_\ell^\times/(\pm 1)$.
\end{proof}
\begin{remark}{\rm
    A generalization of Prop.~\ref{pro1,3} to $\cH_d$ for general
    odd $d$ is presented below in Prop.~\ref{3mod4}. 
}\end{remark}

\begin{remark}
{\rm The maximal curves $\cH_\ell$ described in
Proposition~\ref{pro1,3} are covered by the Hermitian curve. 
This follows as a special case of composing maps presented in the proofs of
\cite[Thm.~1]{Taf} and \cite[Thm.~3.2]{GSJ}.}
\end{remark}

\subsection{Case \texorpdfstring{$\ell\equiv 1\bmod 4,\;p\equiv 1\bmod 4$}{ell=1 mod 4, p=1 mod 4}}\label{1,1}
Given primes $\ell\neq p$ with $\ell\equiv p\equiv 1\bmod 4$, consider the decomposition group
$D_{p}= \langle (1, \pm p)   \rangle\subset \left(\mathbb{Z}/4\mathbb{Z}\right)^\times \times\FF_\ell^\times/(\pm 1)$.
As remarked in Section~\ref{prelim}, a necessary condition for existence of a finite extension of $\FF_p$
over which $\cH_\ell$ is maximal, is that $\#D_p$ is even.
In the case under consideration this is equivalent to the order of $p\bmod\ell\in\FF_\ell^\times$ being a
multiple of $4$. We now consider the two extreme cases of this.

\vspace{\baselineskip} \noindent
{\bf Maximal order.} The result here is as follows; note the novel technique to show maximality in the given situation.
\begin{thm}\label{promax}
 Let $\ell\equiv p\equiv 1\bmod 4$ be distinct prime numbers and assume $p$ is a primitive
 root modulo $\ell$.

 Then for $q=p^a$ the curve $\cH_\ell$
 is maximal over $\FF_{q^2}$ if and only if
 $a=b\cdot (\ell-1)/2$ with $b$ a positive odd integer.
\end{thm}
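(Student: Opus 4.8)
The plan is to reduce everything to the single claim that $\cH_\ell$ is maximal over $\FF_{p^{\ell-1}}$, and to prove that claim by pinning down the relevant power of Frobenius through the complex‑multiplication structure; the passage to general $q$ is then elementary.

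Write $\pi$ for the $p$-power Frobenius on $\cJ:=\cJ(\cH_\ell)$ and set $m:=(\ell-1)/2$; since $\ell\equiv 1\bmod 4$, $m$ is even, so $p^{m/2}\in\ZZ$ and $\FF_{p^{\ell-1}}=\FF_{(p^{m})^2}$. I would argue in five steps. \textbf{(i)} Since $p$ is a primitive root modulo $\ell$ and $p\equiv 1\bmod 4$, the decomposition group is $D_p=\langle(1,\pm p)\rangle$, of order $m$, hence equal to all of $\{1\}\times\FF_\ell^\times/(\pm 1)$; for the two cosets $\tau$ one computes $\#(\Phi_\ell\cap D_p\tau)=(\ell-1)/4$, so every slope of $\pi$ on $\cJ$ is $1/2$ and the reduction is supersingular. \textbf{(ii)} The permutation‑polynomial fact recalled in Section~\ref{prelim} gives $\#\cH_\ell(\FF_{p^n})=p^n+1$ for $n<m$, so by Newton's identities together with the functional equation the characteristic polynomial of $\pi$ has the shape $P(X)=X^{2m}+c\,X^{m}+p^{m}$ with $c\in\ZZ$. \textbf{(iii)} Now $\pi^{m}$ commutes with the $K_\ell$-action, hence lies in $K_\ell$ (which is its own centralizer, as $[K_\ell:\QQ]=2\dim\cJ$); it has the same normalized valuations as $p^{m/2}$ at the primes of $K_\ell$ above $p$ (both with slope $1/2$) and the same archimedean absolute values ($=p^{m/2}$, by Weil), so $\pi^{m}/p^{m/2}$ is a root of unity of $K_\ell$; as $\mu(K_\ell)=\mu_4$ (because $i\in K_\ell$ but $\zeta_\ell\notin K_\ell\subset\QQ(\zeta_{4\ell})$), we get $\pi^{m}=\varepsilon\,p^{m/2}$ with $\varepsilon\in\mu_4$. \textbf{(iv)} Suppose $\varepsilon=\pm1$. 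Then $\pi^{m}=[\pm p^{m/2}]$, so over $\FF_{p^{m}}=\FF_{(p^{m/2})^2}$ the curve $\cH_\ell$ is maximal (if $\varepsilon=-1$) or has Frobenius $[p^{m/2}]$ (if $\varepsilon=1$); in either case $\cJ[2]\subseteq\cJ(\FF_{p^{m}})$, so, as in Section~\ref{prelim}, every root $\alpha$ of $\varphi_\ell$ lies in $\FF_{p^{m}}$. But a primitive $4\ell$-th root of unity $\zeta$ gives a root $\zeta+\zeta^{-1}$ of $\varphi_\ell$, and $\zeta+\zeta^{-1}\in\FF_{p^{m}}$ would force $p^{m}\equiv\pm1\bmod 4\ell$, which is impossible since $p^{m}\equiv 1\bmod 4$ while $p^{m}\equiv\leg{p}{\ell}=-1\bmod\ell$. \textbf{(v)} Hence $\varepsilon=\pm i$, so $\pi^{2m}=-p^{m}$, forcing $c=0$ and $P(X)=X^{\ell-1}+p^{(\ell-1)/2}$; in particular $\cH_\ell$ is maximal over $\FF_{p^{\ell-1}}$.

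It remains to derive the stated equivalence. With $P(X)=X^{\ell-1}+p^{(\ell-1)/2}$, the eigenvalues of $\pi$ are exactly the $p^{1/2}\omega$ where $\omega$ runs over the $(\ell-1)$-th roots of $-1$, and $\cH_\ell$ is maximal over $\FF_{q^2}=\FF_{p^{2a}}$ (with $q=p^a$) if and only if $\pi^{2a}=[-p^{a}]$, equivalently $\omega^{2a}=-1$ for every such $\omega$. Testing $\omega$ of exact order $2(\ell-1)$ shows this forces $(\ell-1)/2\mid a$ and $(\ell-1)\nmid a$, i.e. $a=b\cdot(\ell-1)/2$ with $b$ a positive odd integer; conversely, for such $a$ one has $\omega^{2a}=(\omega^{\ell-1})^{b}=(-1)^{b}=-1$ for every admissible $\omega$. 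This is precisely the assertion.

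The step I expect to be the main obstacle is the input to \textbf{(iii)}--\textbf{(iv)}, namely that $\pi^{m}$ centralizes the $K_\ell$-action; this requires knowing that the endomorphisms realizing $K_\ell$ (from \cite{TTV}) are already defined over $\FF_{p^{m}}$ — concretely, that $(x,y)\mapsto(-x,iy)$ is defined over $\FF_p$ because $i\in\FF_p$, and that the real multiplication descends to $\FF_{p^{m}}$ because the primitive‑root hypothesis makes the $p^{m}$-Frobenius send $\zeta_\ell$ to $\zeta_\ell^{-1}$. An alternative, which is the route used in the paper, needs only the weaker fact that $\pi^{2m}=\pm p^{m}$ (from $\pi^{2m}\in K_\ell$, $\mu(K_\ell)=\mu_4$ and $c\in\ZZ$), notes that $\varepsilon=\pm1$ is then equivalent to all of $\cJ[4]$ being $\FF_{p^{\ell-1}}$-rational (since $p^{m}\equiv 1\bmod 4$), and disproves the latter by a $2$-descent computation: via the $x-T$ homomorphism one reduces the $2$-divisibility over $\FF_{p^{\ell-1}}=\FF_p(\zeta_\ell)$ of the relevant $2$-torsion classes to explicit square‑class conditions, which fail exactly because $p$ is a primitive root modulo $\ell$.
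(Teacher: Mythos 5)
Your proposal is correct, and while steps (i)--(ii) (the slope computation via the Shimura--Taniyama formula and the reduction to $P(X)=X^{2g}+cX^g+p^g$ via Dickson's permutation--polynomial result) coincide with the paper's, your treatment of the decisive step --- showing $c=0$ --- is genuinely different. The paper stays over $\FF_p$: it uses the Weil bound to force $c=mp^{g/2}$ with $m\in\{-2,\dots,2\}$, then computes $\#\cJ(\cH_\ell)(\FF_p)\bmod 4$ by showing $(0,0)-\infty$ is the unique rational $2$-torsion class (irreducibility of $\psi_\ell$ over $\FF_p$) and is not divisible by $2$, the latter via the $2$-descent map $\delta$ and the Legendre symbol $\leg{\ell}{p}=-1$; this gives $\#\cJ(\FF_p)\equiv 2\bmod 4$, hence $m=0$. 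You instead pass to $\pi^{g}$ inside the CM field: the centralizer argument ($[K_\ell:\QQ]=2\dim\cJ$, the generators $\iota$ and the real multiplication being defined over $\FF_p$ resp.\ $\FF_p(\zeta_\ell+\zeta_\ell^{-1})=\FF_{p^{g}}$), Kronecker's theorem, and $\mu(K_\ell)=\mu_4$ give $\pi^{g}=\varepsilon p^{g/2}$ with $\varepsilon\in\mu_4$; you kill $\varepsilon=\pm1$ because it would put all Weierstrass points in $\FF_{p^{g}}$, whereas $\zeta_{4\ell}+\zeta_{4\ell}^{-1}$ generates a degree-$(\ell-1)$ extension (equivalently, $p^{g}\not\equiv\pm1\bmod 4\ell$ since $p^g\equiv 1\bmod 4$ and $p^g\equiv-1\bmod\ell$). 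Both routes consume the primitive-root hypothesis at exactly this point, and both are sound; yours trades the explicit descent computation for the standard CM/Honda--Tate machinery (at the cost of having to verify the field of definition of the endomorphisms from \cite{TTV}, which you correctly identify as the technical crux, and which does hold since those correspondences are defined over $\QQ(\zeta_\ell+\zeta_\ell^{-1})$). The paper's argument is more self-contained and is the "novel technique" it advertises; yours is arguably more conceptual and explains \emph{why} only the sign ambiguity $\varepsilon\in\mu_4$ needs to be resolved. One small caveat: your closing sketch of "the route used in the paper" is not what the paper actually does (it works over $\FF_p$ and never forms $\pi^{2g}=\pm p^{g}$), but this does not affect the validity of your own argument. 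The final deduction of the exact set of exponents $a=b(\ell-1)/2$, $b$ odd, from $P(X)=X^{\ell-1}+p^{(\ell-1)/2}$ is the same in both.
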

\begin{proof}
The conditions on $\ell,p$ mean that $D_p=\{1\}\times \FF_\ell^\times/(\pm 1)$.
Then $\#(\Phi_\ell\cap D_p)=(\ell-1)/4$ (count the odd integers in $1,2,\ldots,(\ell-1)/2$). 
Similarly 
$\#\left(\Phi_\ell\cap D_p\cdot(-1,\pm 1)\right)=(\ell-1)/4$. Hence the Newton polygon of the $p$-Frobenius
on $\cJ(\cH_\ell)$ has $\frac12$ as its only slope. We will determine the corresponding characteristic
polynomial. 

The assumption on the order of $p\bmod \ell$ implies that $\varphi_\ell(x)$ is a permutation polynomial on $\FF_{p^a}$ and hence $\#\cH_\ell(\FF_{p^a})=p^a+1$ for
every $a$ satisfying
$1\leq a<(\ell-1)/2=g$. So the characteristic polynomial of the $p$-Frobenius has the form
\[
X^{2g}+nX^g+p^g
\]
with $g=g(\cH_\ell)=(\ell-1)/2$ and $n\in\mathbb{Z}$. Moreover, since its Newton polygon has $\frac12$ as
its only slope, $p^{(\ell-1)/4}|n$.
The characteristic polynomial therefore is
\[
X^{2g}+mp^{g/2}X^g+p^g
\]
for some integer $m$. All zeros in $\mathbb{C}$
of this polynomial have absolute value $\sqrt{p}$, and hence $m^2-4\leq 0$,
so $m\in\{-2,-1,0,1,2\}$. Evaluating at $X=1$
and using $p\equiv 1\bmod 4$ yields 
\[\#\cJ(\cH_\ell)(\FF_p)=1+mp^{g/2}+p^g\equiv (2+m)\bmod 4.
\]
We claim that $\#\cJ(\cH_\ell)(\FF_p)\equiv 2\bmod 4$ (which shows $4|m$ and hence $m=0$).
Indeed, the divisor $(0,0)-\infty$ on $\cH_\ell$ yields
a rational point of order $2$ in $\cJ(\cH_\ell)$.
Any other rational point of order $2$
would correspond to a factorization of
$\varphi_\ell(x)\in\FF_p[x]$ into two
factors, with neither factor a constant multiple
of $x$. Write $\phi_\ell(x)=x\cdot \psi_\ell(x)$. Any zero of $\psi_\ell(x)$ is
of the form $\zeta_{4\ell}+\zeta_{4\ell}^{-1}$
with $\zeta_{4\ell}$ a primitive $4\ell$-th
root of unity. We will show that
$[\FF_p(\zeta_{4\ell}+\zeta_{4\ell}^{-1})\,:\,\FF_p]=\ell-1$. Indeed, using the same notation
for primitive roots of unity in characteristic $0$, 
the subfield $\mathbb{Q}(\zeta_{4\ell}+\zeta_{4\ell}^{-1})\subset \mathbb{Q}(\zeta_{4\ell})$ has Galois group
$(\mathbb{Z}/4\ell\mathbb{Z})^\times/(\pm 1)\cong
\left((\mathbb{Z}/4\mathbb{Z})^\times 
\times \FF_\ell^\times\right)/\langle (-1,-1)\rangle$.
Our assumption that $p\equiv 1\bmod 4$ is a primitive root modulo $\ell$ means that
the decomposition group $\tilde{D}_p=\langle(1,p)\rangle$ at $p$ in this
Galois group has size $\ell-1$.
As $\tilde{D}_p\cong\textrm{Gal}(\FF_p(\zeta_{4\ell}+\zeta_{4\ell}^{-1})\,/\,\FF_p)$,
this proves the assertion regarding the degree
of $\FF_p(\zeta_{4\ell}+\zeta_{4\ell}^{-1})$.
So $\psi_\ell(x)\in\FF_p[x]$ is irreducible,
which means that $(0,0)-\infty$ gives rise to
the only point of order $2$ in $\cJ(\cH_\ell)(\FF_p)$.
Hence to show $\#\cJ(\cH_\ell)(\FF_p)\equiv 2\bmod 4$, it remains to prove that
$\cJ(\cH_\ell)(\FF_p)$ contains no point of
order $4$, i.e., no point $P$ such that
$2P$ corresponds to the divisor class of
$(0,0)-\infty$. To verify that indeed
no such $P$ exists, a well-known
technique from $2$-descent computations can be used:
there is a homomorphism
\[
\delta\colon \cJ(\cH_\ell)(\FF_p)\,/\, 2\cJ(\cH_\ell)(\FF_p)
\longrightarrow \FF_p^\times/{\FF_p^\times}^2
\]
given, in terms of divisor classes, by
\[
\delta\colon [(\alpha,\beta)-\infty]
\mapsto \left\{
\begin{array}{rl}
    \alpha{\FF_p^\times}^2 & \textrm{if}\;\;\alpha\neq 0,\\
    \ell\,{\FF_p^\times}^2 & \textrm{if}\;\;\alpha=0;
\end{array}
\right.
\]
see \cite[Lemma~2.2]{Schaefer} or
\cite[Lemma~4.3.(2)(ii)]{Stoll}.
Here we use that $p\equiv 1\bmod 4$
and therefore the properties of
Chebyshev polynomials listed in
Section~\ref{intro} yield $\psi_\ell(0)=\varphi'_\ell(0)=\ell$.
Since $p\equiv 1\bmod 4$ is
a primitive root modulo $\ell$, the Legendre symbol satisfies
$\leg{\ell}{p}=\leg{p}{\ell}\equiv
p^{(\ell-1)/2}\bmod \ell=-1\bmod \ell$.
This shows that $\delta([(0,0)-\infty])$ is nontrivial, and one concludes that
$[(0,0)-\infty]\not\in 2\cJ(\cH_\ell)(\FF_p)$.

As a consequence, the $p$-Frobenius
on $\cJ(\cH_\ell)$ has characteristic
polynomial
\[ X^{2g}+p^g.\]
Hence for $q=p^a$ (and primes $p\neq \ell$ such that $p\equiv \ell\equiv 1\bmod 4$ with $p$ a primitive root modulo $\ell$) the curve
$\cH_\ell$ is maximal over $\FF_{q^2}$ precisely when $a=bg=b(\ell-1)/2$ with $b$ a positive odd integer, which is what we wanted to show.
\end{proof}

\begin{remark}{\rm As shown in
\cite[Prop.~2.3]{CH}, the shape of the
characteristic polynomial in the above proof (namely $X^{2g}+nX^g+p^g$) 
implies that $\cJ(\cH_\ell)$
is $\FF_{p^g}$-isogenous to
the $g$-fold product of some
elliptic curve. Indeed, this is
a simple consequence of classical work
of Tate \cite[Thm.~1(c)]{T}.

One of the ingredients in our
proof is to use the fact that
$\varphi_\ell(x)$ is a permutation polynomial over various $\FF_{p^a}$, in order
to restrict the possibilities
for the characteristic polynomial of the $p$-Frobenius. This idea also appears
in work of \"{O}zbudak \cite[Section~2]{OZ}.
}\end{remark}
\vspace{\baselineskip} \noindent
{\bf Minimal order.} Here we discuss the case
that the prime number $p\equiv 1\bmod 4$
has order $4$ in $\FF_\ell^\times$.
For convenience when studying the CM-type $\Phi_\ell$, the following notation
is introduced. 

\begin{notation}
 Given $a\in\FF_\ell^\times$, denote by $\rep{a}\in\mathbb{Z}_{>0}$ the smallest positive integer $n$
 with the property that $n\bmod \ell \in\{a,\,-a\}$.
\end{notation}
Note that for $a\in\FF_\ell^\times$ one has $1\leq \rep{a}=\rep{-a}\leq (\ell-1)/2$.
The CM type $\Phi_\ell$ can be described, as a subset of 
$\left(\mathbb{Z}/4\mathbb{Z}\right)^\times \times\FF_\ell^\times/(\pm 1)$,
by
\[
\begin{array}{rcl}
(1,\pm a)\in\Phi_\ell & \Longleftrightarrow & \rep{a}\;\textrm{is odd},\\
(-1,\pm a)\in\Phi_\ell & \Longleftrightarrow & \rep{a}\;\textrm{is even}.
\end{array}
\]
\begin{pro}\label{promin}
  Given distinct prime numbers $\ell\equiv p\equiv 1\bmod 4$ such that $\ell\neq 5$ and $p\bmod\ell\in\FF_\ell^\times$ has order $4$,
  the curve $\cH_\ell$ is not maximal over any
  finite extension of $\FF_p$.
\end{pro}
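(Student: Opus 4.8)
The plan is to show that maximality is incompatible with the $p$-Frobenius on $\cJ(\cH_\ell)$ being pure of slope $\tfrac12$, by producing one coset of the decomposition group on which the CM type behaves wrongly.

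First I would record the data. Since $p$ has order $4$ in $\FF_\ell^\times$ we have $p^2\equiv-1\bmod\ell$, so $\pm p$ has order $2$ in $\FF_\ell^\times/(\pm 1)$, and because $p\equiv 1\bmod 4$ the decomposition group is $D_p=\langle(1,\pm p)\rangle=\{(1,\pm1),(1,\pm p)\}$, of order $2$. If $\cH_\ell$ were maximal over some $\FF_{p^{2e}}$, then by the remark in Section~\ref{prelim} the only slope of the $p$-Frobenius on $\cJ(\cH_\ell)$ would be $\tfrac12$; since $\#D_p=2$, the slope formula of Section~\ref{prelim} forces $\#(\Phi_\ell\cap D_p\cdot\tau)=1$ for every $\tau$ in a set of representatives for the cosets of $D_p$ in $\textrm{Gal}(K_\ell/\QQ)$.

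Next I would make this combinatorial. Multiplication by $p$ is a fixed-point-free involution on $\FF_\ell^\times/(\pm 1)$, so that group splits into $(\ell-1)/4$ two-element orbits $\{b,pb\}$, and the cosets of $D_p$ are exactly the sets $\{(\epsilon,\pm b),(\epsilon,\pm pb)\}$ with $\epsilon\in(\ZZ/4\ZZ)^\times$ and $\{b,pb\}$ such an orbit. Using the description of $\Phi_\ell$ in terms of $\rep{\cdot}$ stated just above, for \emph{either} choice of $\epsilon$ the equality $\#(\Phi_\ell\cap D_p\cdot\tau)=1$ says precisely that $\rep{b}$ and $\rep{pb}$ have opposite parity. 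So it is enough to exhibit a single orbit $\{b,pb\}$ on which $\rep{\cdot}$ does not switch parity.

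For the orbit I would use the two-squares decomposition of $\ell$. Write $\ell=u^2+v^2$ with integers $u>v\ge1$; then $(uv^{-1})^2\equiv-1\bmod\ell$, hence $uv^{-1}\equiv\pm p$, so multiplication by $p$ sends the class of $2v$ to the class of $2u$, and these classes are distinct because $1\le v<u$ and $v+u<\ell$. The key elementary estimate is that $\ell\neq5$ forces $2u\le(\ell-1)/2$: with $m=(\ell-1)/4$ one has $u\le\sqrt{\ell-1}=\sqrt{4m}$, and $\sqrt{4m}<m+1$ is equivalent to $(m-1)^2>0$, i.e. to $m\ge2$, i.e. to $\ell\neq5$; thus $u\le m$, so $2u\le2m=(\ell-1)/2$ and a fortiori $2v\le(\ell-1)/2$. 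Consequently $\rep{2u}=2u$ and $\rep{2v}=2v$ are both even, contradicting the parity condition from the previous step; hence $\cH_\ell$ is not maximal over any finite extension of $\FF_p$. The part I expect to be the main obstacle is finding this orbit: the reduction of maximality to a parity statement about the $p$-orbits on $\FF_\ell^\times/(\pm 1)$ is routine given the CM-type description, but one must notice that the Gaussian-integer "rotation" coming from $\ell=u^2+v^2$ acts transparently on the small integers $2v,2u$, and that the inequality $(m-1)^2>0$ is exactly what isolates the exceptional prime $\ell=5$.
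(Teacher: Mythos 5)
Your argument is correct, and while it shares the paper's overall framework --- reduce maximality to the statement that every coset $D_p\tau$ meets $\Phi_\ell$ in exactly one element, then exhibit a coset violating this --- the way you produce the bad coset is genuinely different and cleaner. The paper sets $a=\rep{p}$ (the smallest positive representative of the square roots of $-1$ modulo $\ell$) and runs a four-stage case analysis, testing the cosets $D_p\cdot(1,\pm m)$ for $m=2,4,5,3$ according to which subinterval of $(0,\ell/2)$ contains $a$. You instead write $\ell=u^2+v^2$ and observe that multiplication by $p\equiv\pm uv^{-1}$ carries the class of $2v$ to the class of $2u$, while the elementary bound $u\le\sqrt{\ell-1}<(\ell-1)/4+1$ (valid exactly when $\ell\neq 5$) gives $\rep{2v}=2v$ and $\rep{2u}=2u$, both even; hence the single coset $D_p\cdot(1,\pm 2v)$ misses $\Phi_\ell$ entirely and yields a slope $0$. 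This is a uniform one-step construction, and it makes transparent why $\ell=5$ is excluded: there $u=2$, $v=1$, so $2u=4>(\ell-1)/2$ and $\rep{4}=1$ is odd, so the parity alternation does hold on that orbit (consistent with Thm.~\ref{promax}, since order $4$ means primitive root when $\ell=5$). All the supporting steps in your write-up --- the identification of the cosets of $D_p$, the equivalence of the all-slopes-$\frac12$ condition with the parity alternation of $\rep{b}$ and $\rep{pb}$ on each orbit, and the distinctness of the classes of $2u$ and $2v$ --- check out against the definitions in Section~\ref{prelim} and the description of $\Phi_\ell$ given just before the proposition.
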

\begin{proof} Write $i:=p\bmod \ell$. The decomposition group
$D_p$ equals $\{(1,\pm 1),\,(1,\pm i)\}$ and we are
interested in the possible slopes
\[
\frac{\#\left(\Phi_\ell\cap D_p\tau\right)}{\#D_p}\in\{0,\,1/2,\,1\}
\]
for $\tau\in \left(\mathbb{Z}/4\mathbb{Z}\right)^\times \times\FF_\ell^\times/(\pm 1)$.
We claim that for any $p,\ell$
as described here, with $\ell> 5$, a slope $\neq 1/2$ occurs.

Indeed, write $a:=\rep{i}<\ell/2$, the smallest integer $>0$
such that $a^2\equiv -1\bmod \ell$.
If $a$ is odd, then $D_p\subset \Phi_\ell$
provides a slope $1$. Hence we may and will
assume $a$ is even. Then 
\[D_p\cdot(1,\pm 2)=\{(1,\pm 2),\,(1,\pm 2a)\}
\]
where $2<\ell/2$ and $2<2a<\ell$.
Hence if $2a<\ell/2$ we obtain a slope $0$.
This reduces our claim to the case of an
even $a$ with $\ell/4<a<\ell/2$. Here, take
\[
D_p\cdot(1,\pm 4)=\{(1,\pm 4),\,(1,\pm4a)\}.
\]
Since $\ell>5$ and $\ell\equiv 1\bmod 4$,
we have $\rep{4}=4$. The smallest representants
of the classes $\pm 4a\bmod \ell$ are $4a-\ell$
and $2\ell-4a$. This again yields a slope $0$,
unless $4a-\ell<2\ell-4a$.
So what remains is the case $a$ even with
$\frac{\ell}{4}<a<\frac{3}{8}\ell$.
Under these conditions we consider
\[
D_p\cdot(1,\pm 5)=\{(1,\pm 5),\,(1,\pm 5a)\}.
\]
As before, $\rep{5}=5$. The smallest positive
representants of $\pm 5a\bmod\ell$ are $5a-\ell$
and $2\ell-5a$. This results in a slope $1$
unless $\frac{3}{10}\ell<a<\frac{3}{8}\ell$.
Assuming these inequalities, take
\[
D_p\cdot(1,\pm 3)=\{(1,\pm 3),\,(1,\pm 3a)\}.
\]
In case $3a<\ell$, the smallest positive
representant of $\pm 3a\bmod \ell$ is $\ell-3a$
and this leads to a slope $1$.
In the remaining case $\ell<3a<\frac{9}{8}\ell$,
clearly $3a-\ell<2\ell-3a$ so also here a slope
$1$ is obtained. This proves the claim.
The proposition is an immediate consequence.
\end{proof}

\begin{cor}\label{corol}Let $\ell$ be a prime number of the
form $\ell=4k+1$ with $k=1$ or $k$ prime.
The only characteristics $p\neq \ell$ with $p\equiv 1\bmod 4$ such that $\cH_\ell$ is maximal over some finite
extension of $\FF_p$ are the ones with the property
that $p$ is a primitive root modulo $\ell$.
\end{cor}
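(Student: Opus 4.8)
The plan is to derive Corollary~\ref{corol} directly from the two extreme-order results of \S\ref{1,1} together with the elementary structure of the divisors of $\ell-1$. Fix throughout a prime $p\neq\ell$ with $p\equiv 1\bmod 4$, and let $m$ denote the multiplicative order of $p\bmod\ell$ in $\FF_\ell^\times$.

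First I would use the necessary condition recorded at the start of \S\ref{1,1}: if $\cH_\ell$ is maximal over some finite extension of $\FF_p$, then $\#D_p$ is even, which for $p\equiv 1\bmod 4$ (so that $D_p=\langle(1,\pm p)\rangle$) amounts to $4\mid m$. Since $m\mid \ell-1=4k$, the next step is to list the divisors of $4k$ that are divisible by $4$. If $k=1$ then $\ell=5$ and $4k=4$, so the only possibility is $m=4=\ell-1$, i.e.\ $p$ is a primitive root modulo $\ell$. If $k$ is prime then $k\neq 2$ (otherwise $\ell=9$ would not be prime), so $k$ is an odd prime, $4k=2^2k$, and its divisors divisible by $4$ are exactly $4$ and $4k$; hence $m\in\{4,\ell-1\}$.

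It then remains to eliminate the case $m=4$ when $k$ is an odd prime. In that situation $\ell=4k+1\geq 13$, so in particular $\ell\neq 5$ and Proposition~\ref{promin} applies: it tells us $\cH_\ell$ is not maximal over any finite extension of $\FF_p$, contradicting the hypothesis. Therefore $m=\ell-1$ in every case, which is the claimed conclusion. For completeness I would also note the converse: if $p\equiv 1\bmod 4$ is a primitive root modulo $\ell$, then Theorem~\ref{promax} with $b=1$ shows $\cH_\ell$ is maximal over $\FF_{p^{\ell-1}}$, so the characteristics singled out by the corollary do occur.

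The argument involves no serious obstacle; it is an assembly of Theorem~\ref{promax}, Proposition~\ref{promin}, and a divisor count. The one point to watch is that Proposition~\ref{promin} excludes $\ell=5$, whereas the corollary does not---but $\ell=5$ is precisely the case $k=1$, and there the constraint $4\mid m$ already forces $m=\ell-1$ with no appeal to Proposition~\ref{promin}, so the two ranges of validity fit together exactly.
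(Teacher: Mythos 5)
Your proof is correct and is exactly the argument the paper intends: the corollary is stated without proof precisely because it follows from the necessary condition $4\mid m$ noted at the start of \S\ref{1,1}, the observation that the divisors of $4k$ divisible by $4$ are only $4$ and $4k$ (with $k$ necessarily $1$ or an odd prime), Proposition~\ref{promin} ruling out $m=4$ for $\ell\neq 5$, and Theorem~\ref{promax} supplying the converse. Your handling of the $\ell=5$ boundary case is also the right way to reconcile the hypotheses of the two propositions.
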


\begin{example}{\rm
The smallest primes $\ell\equiv 1\bmod 4$ are
$5$ and $13$. These cases are covered
as instances of Propositions~\ref{promax},~\ref{promin} and 
Corollary~\ref{corol}.

 For $\ell=17$ and $17\neq p\equiv 1\bmod 4$,
 the only case of a decomposition group $D_p$
 of even order not discussed in the
 Propositions~\ref{promax}, \ref{promin} is
 \[
 D_p=\{(1,\pm1),\,(1,\pm 2),\,(1,\pm 4),\,(1,\pm 8)\}.
 \]
 This occurs whenever $p\equiv 25,\,45,\,49,\,53\bmod 68$.
 Note that $D_p\cap\Phi_\ell=\{(1,\pm 1)\}$,
 so one of the resulting slopes is $1/4$.
 So $\cH_{17}$ is not maximal over any finite
 extension of $\FF_p$ for such $p$.
 In fact, $1/4$ and $3/4$ are the only slopes in this case, each with length $8$. So the
 $p$-rank of $\cJ(\cH_{17})$ is $0$ in the
 described situation, whereas this Jacobian
 is not supersingular.

 A next case not covered completely by the results obtained
 so far, is $\ell=37$. Here apart from a maximal and
 minimal even case, also $\#D_p=6$ occurs,
 explicitly
 \[
 D_p=\{(1,\pm 1),\,(1,\pm6),\,(1,\pm 8),\,(1,\pm10),\,(1,\pm 11), (1,\pm 14)\}.
 \]
 This is the subgroup of $\{1\}\times \FF_\ell^\times/(\pm 1)$
 generated by $(1,\pm 8)$. Since $D_p\cap \Phi_\ell$ leads
 to a slope $1/3$, also here a necessary (and sufficient)
 condition for maximality of $\cH_\ell$ in
 characteristic $p\equiv 1\bmod 4$ is that $p$ is a primitive root modulo $\ell$.

 As a last example, consider $\ell=41$. Here the additional
 cases
 \[D_p=\{(1,\pm 1),\,(1,\pm3),\,(1,\pm 9),\,(1,\pm 14)\}\]
 and
 \[D_p=\left\{(1,\pm a)\;:\;a\in\{1,2,4,5,8,9,10,16,18,20\}\right\},\]
 coming from an element of order $4$ resp. $10$ in
 $\FF_{41}^\times/(\pm 1)$, turn up.
 The first one leads to a Newton polygon with slopes
 $1/4,\,3/4$ and the second one results in slopes
 $3/20,\,17/20$. Hence again $p$ a primitive root modulo
 $\ell$ is a necessary and sufficient condition
 for maximality of $\cH_\ell$ over some field
 of characteristic $p\equiv 1\bmod 4$.

 We do not know whether this is true in general. Using
 the {\tt slopes} Magma code
 presented on p.~\pageref{PrimeMagma} we
 verified that it holds for
 every prime $\ell\equiv 1\bmod 4$, $\ell\leq 101$.
}\end{example}
\begin{remark}{\rm
Similar to the proof of Prop.~\ref{promin}, we verified
for primes $\ell$, $p\equiv 1\bmod 4$ such
that $p\bmod \ell\in\FF_\ell^\times$ has order $8$
(i.e., the decomposition group $D_p\subset\text{Gal}(\mathbb{Q}(i,\zeta_\ell+\zeta_\ell^{-1})/\mathbb{Q})$
has order $4$), a slope $\neq 1/2$ at $p$
on $\cJ(\cH_\ell)$ occurs. Hence also in these cases
maximality of $\cH_\ell$ in characteristic $p$ does not happen.
The results motivate the following question.}\end{remark}
\begin{question}\label{prime1mod4case}
  Given distinct primes $\ell\equiv p\equiv 1\bmod 4$,
  is it true that maximality of $\cH_\ell$ in characteristic $p$ is {\em only} possible in the situation of Thm.~\ref{promax} (i.e., when $p$ is a primitive root modulo $\ell$)?
\end{question}

\begin{remark}
{\rm When $p \equiv 1 \pmod 4$ is a primitive root modulo the prime $\ell\equiv 1\bmod 4$ and $q=p^{(\ell-1)/2}$, we
do not know whether the maximal curve $\cH_\ell/\FF_{q^2}$ is covered by the Hermitian curve.
As a small, explicit example: consider $\ell=5$ and $q=13^2=169$. The genus $2$ curve
$\cH_5\colon y^2=x^5-5x^3+x$ is maximal over $\FF_{13^4}$. Is it over this field covered by the Hermitian curve?

Note that Giulietti and Korchm\'aros \cite{GK} constructed the first example of a maximal curve over 
a finite field which is not covered by the Hermitian curve.}
\end{remark}
\section{General odd degree (some results and calculations)}

Throughout this section, $d\in\mathbb{Z}_{\geq 3}$ will be odd.
We will first discuss necessary
and sufficient conditions for
maximality of $\cH_d$ over
finite fields of characteristic
$p\equiv 3\bmod 4$. The
case $p\equiv 1\bmod 4$ is then
discusses in a separate subsection.

\subsection{Characteristic \texorpdfstring{$3$}{3} modulo \texorpdfstring{$4$}{4}}
 Combining results from our papers \cite{TT} and \cite{GSJ}
 with work by Kodama and Washio \cite{KW} leads to the following.
 \begin{pro}\label{3mod4}
     Let $d\in\mathbb{Z}_{\geq 1}$ be odd and let $p\equiv 3\bmod 4$ be a prime number.

     Then $\cH_d$ is maximal over some finite field of
     characteristic $p$ if and only if the subgroup
     $\langle \bar{p}\rangle
     \subseteq (\mathbb{Z}/4d\mathbb{Z})^\times$ contains either $\overline{-1}$ or $\overline{1+2d}$.

     If these equivalent conditions are satisfied,
     then writing $q=p^n$ and
     $2k=\text{ord}(\bar{p})$, 
     it holds that $k$ is an odd integer and one has
\[
\cH_d/\Fq2\;\text{is maximal}\;
\Longleftrightarrow \; n=km\;
\text{with}\;m\in\mathbb{Z}_{\geq 1}\;\text{odd}\;
\Longleftrightarrow \; q\equiv -1,\,1+2d\bmod 4d.
\]
 \end{pro}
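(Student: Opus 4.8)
The plan is to reduce the proposition to a single equivalence about which power of $p$ is taken, to prove that equivalence by combining one direction of \cite{TT} with a converse built from \cite{GSJ} and \cite{KW}, and then to read off the stated conclusions by an elementary computation in $(\ZZ/4d\ZZ)^\times$ together with a Frobenius-power descent. For the group theory: since $d$ is odd, $(\ZZ/4d\ZZ)^\times\cong(\ZZ/4\ZZ)^\times\times(\ZZ/d\ZZ)^\times$, and as $p\equiv 3\bmod 4$ the class $\bar p$ corresponds to $(-1,\,p\bmod d)$. Hence $\langle\bar p\rangle$ contains $\overline{-1}=(-1,-1)$ or $\overline{1+2d}=(-1,1)$ exactly when some odd $j$ satisfies $p^{j}\equiv\pm1\bmod d$, i.e.\ exactly when the order $\bar m$ of $p$ in $(\ZZ/d\ZZ)^\times/\{\pm1\}$ is odd; and then, writing $m'$ for the order of $p$ in $(\ZZ/d\ZZ)^\times$ (so $m'\in\{\bar m,2\bar m\}$), one gets $\text{ord}(\bar p)=\text{lcm}(2,m')=2\bar m$, so $k=\bar m$ is odd, while a direct check shows $q=p^{n}\equiv-1$ or $1+2d\bmod 4d$ precisely when $n$ is odd and divisible by $\bar m=k$, equivalently when $n=km$ with $m$ a positive odd integer. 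Thus it suffices to prove that for every power $q$ of $p$,
\[
\cH_d/\FF_{q^2}\ \text{is maximal}\qquad\Longleftrightarrow\qquad q\equiv -1\ \text{or}\ 1+2d\ \bmod\ 4d ,
\]
as this gives the chain of equivalences in the second half of the proposition and also the existence statement (a power of $p$ lands in $\{\overline{-1},\overline{1+2d}\}$ iff $\langle\bar p\rangle$ does).

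The implication ``$\Leftarrow$'' of this equivalence I would quote from \cite[Thm.~1.5]{TT} (equivalently, from \cite[Thm.~3.6, (\textit{iii})$\Rightarrow$(\textit{ii})]{GSJ}): if $q\equiv -1,1+2d\bmod 4d$ then $\cH_d$ is maximal over $\FF_{q^2}$. For ``$\Rightarrow$'' I would mimic the proof of Prop.~\ref{pro1,3}. Assume $\cH_d/\FF_{q^2}$ is maximal with $q=p^{n}$. By Section~\ref{prelim}, $\varphi_d(x)$ then splits over $\FF_{q^2}$; its roots are the $\zeta_{4d}^{j}+\zeta_{4d}^{-j}$ with $j$ odd, so in particular $\zeta_{4d}+\zeta_{4d}^{-1}\in\FF_{q^2}$, and since $\FF_{p^2}\subseteq\FF_{q^2}$, \cite[Lemma~3.5]{GSJ} upgrades this to $\zeta_{4d}\in\FF_{q^2}$, whence $4d\mid q^{2}-1$. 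By \cite[Theorem~3.2]{GSJ} this divisibility yields an $\FF_{q^2}$-isogeny $\cJ(\cX)\sim\cJ(\cH_d)\times\cJ(\cH_d)\times E$ with $\cX\colon y^2=x^{2d+1}+x$ and $E\colon y^2=x^3+x$. Maximality makes $\cJ(\cH_d)$ supersingular and $p\equiv 3\bmod 4$ makes $E$ supersingular, so $\cX$ is supersingular, whence \cite[Theorem~2]{KW} forces $\bar m$ to be odd. Then $\langle\bar p\rangle$ contains $\overline{-1}$ or $\overline{1+2d}$, the integer $k=\bar m$ is odd, and since $p^{\bar m}\equiv\pm1\bmod d$ with $\bar m$ odd we have $\bar p^{\bar m}\in\{\overline{-1},\overline{1+2d}\}$; so by the ``$\Leftarrow$'' direction, $\cH_d$ is already maximal over $\FF_{p^{2k}}$.

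It remains to determine $n$. Writing $F$ for the $p$-Frobenius on $\cJ(\cH_d)$, both $F^{2k}=[-p^{k}]$ and $F^{2n}=[-p^{n}]$ hold, and since $\cJ(\cH_d)$ is positive-dimensional, $[p^{r}]\neq[-p^{r}]$ for every $r\ge 1$. A Bézout manipulation of these two identities, in the spirit of \S\ref{3,3} and \S\ref{1,3}, shows that $S:=\{\,r\ge 1:\ F^{2r}=[-p^{r}]\,\}$ consists exactly of the odd multiples of its least element $k'$, that $k'$ divides both $k$ and $n$, and that $\cH_d$ is maximal over $\FF_{p^{2k'}}$; re-running the splitting argument then gives $4d\mid p^{2k'}-1$, hence $\bar m\mid k'$ (using $\bar m$ odd), so $k'=k$. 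Therefore $k\mid n$ with $n/k$ odd, and then $\bar p^{n}=\bigl((-1)^{n},\,(p^{k})^{n/k}\bmod d\bigr)\in\{\overline{-1},\overline{1+2d}\}$ since $p^{k}=p^{\bar m}\equiv\pm1\bmod d$ and $n/k$ is odd; that is, $q\equiv -1,1+2d\bmod 4d$. This proves the equivalence, and with it the proposition.

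I expect the main obstacle to be this last descent. In the prime-degree situation of Prop.~\ref{pro1,3} one has $q^{2}\equiv 1\bmod\ell$ with $\ell$ prime, which immediately forces $q\equiv\pm1\bmod\ell$; for composite $d$ the divisibility $4d\mid q^{2}-1$ is genuinely weaker than $q\equiv\pm1\bmod d$, so the Frobenius-power bookkeeping together with the minimality of $k=\bar m$ cannot be avoided. A secondary, more routine, concern is to confirm that \cite[Theorem~3.2]{GSJ} (the decomposition $\cJ(\cX)\sim\cJ(\cH_d)^{2}\times E$) and \cite[Theorem~2]{KW} (supersingularity implies oddness of the relevant order) are available for general odd $d$ and not only for prime $d$; if not, these inputs must be re-derived in that generality.
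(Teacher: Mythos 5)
Your proof is correct and follows essentially the same route as the paper: sufficiency via \cite{TT}, necessity via the isogeny $\cJ(\cX)\sim\cJ(\cH_d)^2\times E$ together with supersingularity of $E$ and the Kodama--Washio result, and then the same order/odd-multiple bookkeeping in $(\mathbb{Z}/4d\mathbb{Z})^\times$ for the final descent. The worry you flag about general odd $d$ is resolved exactly as you suspect: the paper invokes \cite[Rem.~5.5]{TT} for the isogeny over $\overline{\FF}_p$ (rather than \cite[Thm.~3.2]{GSJ}) and the Corollary to Theorem~2 of \cite{KW}, both of which are stated for arbitrary odd $d$.
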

 \begin{proof}
 We start by showing the two implications that appear in the first assertion.\\
 $\Leftarrow$: Take
 $q:=p^n$
 such that $\bar{q}\in\{\overline{-1},\overline{1+2d}\}$.
 Then by \cite[Thm.~1.6]{TT}, $\cH_d/\Fq2$ is maximal.\\
 $\Rightarrow$: Let $\cX$ be the curve
 defined by $y^2=x^{2d+1}+x$
 and let $E$ be the elliptic curve with equation $y^2=x^3+x$. As is shown in
 \cite[Rem.~5.5]{TT}, over
 $\overline{\FF}_p$ an
 isogeny $\cJ(\cX)\sim\cJ(\cH_d)^2\times E$ exists. The assumptions that
 $\cH_d$ is maximal and that $p\equiv 3\bmod 4$ therefore imply
 that $\cJ(\cX)$ is supersingular. Then
 \cite[p.~199, Cor. to Thm.~2]{KW} implies that maximality
 occurs for $\cX$ over some finite field of characteristic $p$, and moreover either $\overline{-1}$ or $\overline{1+2d}$ is in $\langle \bar{p}\rangle$.

 To show the remaining parts, first note that
 $(\mathbb{Z}/4d\mathbb{Z})^\times\cong (\mathbb{Z}/4\mathbb{Z})^\times \times (\mathbb{Z}/d\mathbb{Z})^\times$; under the
 standard isomorphism $\bar{p}$
 corresponds to the pair
 $(p\bmod 4,\,p\bmod d)$ in
 the latter group.
 Since $p\equiv 3\bmod 4$, this means that $\text{ord}(\bar{p})$ is even.
 Writing this order as $2k$,
 it follows that $\bar{p}^k$ is
 the unique element of order $2$
 in $\langle \bar{p}\rangle$,
 which by assumption equals one of $\overline{-1},\,\overline{1+2d}$.
 In particular $p^k\equiv 3\bmod 4$, so $k$ is odd.
 Furthermore, $q=p^n\equiv p^k\bmod 4d$ precisely when
 $n$ is an odd multiple of $k$.
 The proof of ``$\Leftarrow$''
 shows that $\cH_d/\Fq2$ is
 maximal. If $\cH_d$ is maximal
 over some $\FF_{p^{2f}}$, then $\varphi_d(x)$ splits completely in $\FF_{p^{2f}}[x]$,
 hence $\Fp2(\zeta_{4d}+\zeta_{4d}^{-1})\subseteq \FF_{p^{2f}}$. Using
 \cite[Lemma~3.5]{GSJ} it follows that $p^{2f}\equiv 1\bmod d$.
 Since also $p^{2f}\equiv 1\bmod 4$, one concludes that $2k|2f$, hence $k|f$.
 Maximality of $\cH_d$ over both $\FF_{p^{2k}}$ and its extension $\FF_{p^{2f}}$
 implies that $f$ is an odd multiple of $k$. This completes the proof.
 \end{proof}
\begin{remark}{\rm
    A crucial ingredient in the proof presented here is the result by Kodama and Washio \cite{KW}
    concerning $\cX\colon y^2=x^{2d+1}+x$. They showed that the three assertions
    \begin{itemize}
        \item[(a)] $\cJ(\cX)$ is supersingular in characteristic $p$;
        \item[(b)] $\cX$ is minimal over some finite field of characteristic $p$;
        \item[(c)] $\cX$ is maximal over some finite field of characteristic $p$,
    \end{itemize}
    are equivalent. Their proof involves calculations with Jacobi sums. Similar ideas
    may be found in the short classical note \cite{TS} by Tate and Shafarevich, who attribute
    it to A.~Weil.
}\end{remark}
If $\ell|d$ for some prime $\ell\equiv 3\bmod 4$, then 
the equality $\varphi_d(x)=\varphi_\ell(\varphi_{d/\ell}(x))$ leads to
a nonconstant morphism
$\cH_d\to\cH_\ell$. As a
consequence, using Proposition~\ref{3,1},
maximality of $\cH_d$ cannot
occur over finite fields of
characteristic $p\equiv 1\bmod 4$. Hence for such odd $d$,
Proposition~\ref{3mod4} describes all cases of maximality in every
characteristic. 
\begin{example}{\rm
  Consider $d=15$. The discussion above combined with Proposition~\ref{3mod4} shows that
  maximality of $\cH_{15}$ occurs in characteristic $p$ precisely when $p\equiv 3\bmod 4$ and
  $\langle \bar{p}\rangle\subset (\mathbb{Z}/60\mathbb{Z})^\times$ contains one of
  $\overline{-1},\overline{31}$.
  This translates into 
  \[ p\equiv -1,\,31 \bmod 60.\]
  In these cases $\cH_{15}/\FF_{p^n}$ is maximal if and only if $n\equiv 2\bmod 4$.
  Note in particular that, for example, when $p\equiv 11\bmod 60$ then both
  $\cH_3$ and $\cH_5$ are maximal over $\FF_{p^2}$, although maximality of
  $\cH_{15}$ cannot occur in such characteristic. Some CM theory discussed in
  the next section turns out to explain this.
}\end{example}

\subsection{Characteristic \texorpdfstring{$1$}{1} modulo \texorpdfstring{$4$}{4}}\label{subsec1mod4}
Combining \cite[Thm.~1.5]{TT} and Prop.~\ref{3,1} results in the observation that a
necessary condition for maximality of $\cH_d$ in characteristic $p\equiv 1\bmod 4$,
provided $d>2$, is that all prime factors of $d$ are $1\bmod 4$.
As the case where $d$ itself is prime, is already discussed in Section~\ref{primedeg},
this means that new such cases have either $\ell_1\ell_2|d$ or $\ell^n|d$ with
$\ell_1\neq \ell_2, \ell$ primes congruent to $1\bmod 4$ and $n\geq 2$.
The elementary properties of the polynomials $\varphi_d(x)$ then yield
either $\cH_d\to\cH_{\ell_1\ell_2}$ or $\cH_d\to\cH_{\ell^n}$ a non-constant morphism.
Therefore, a necessary condition for maximality of $\cH_d$ in characteristic $p$
is (depending on the given $d$) maximality of $\cH_{\ell_1\ell_2}$ or of $\cH_{\ell^n}$.
We focus on these two types of curves.
\begin{lem}\label{ell1ell2}
    Let $\ell_1\neq \ell_2$ be odd prime numbers. The maps $\cH_{\ell_1\ell_2}\to\cH_{\ell_1}$
    and $\cH_{\ell_1\ell_2}\to\cH_{\ell_2}$ coming from $\varphi_{\ell_1\ell_2}(x)=\varphi_{\ell_1}(\varphi_{\ell_2}(x))=
    \varphi_{\ell_2}(\varphi_{\ell_1}(x))$ lead to a $\mathbb{Q}$-isogeny
    \[ \cJ(\cH_{\ell_1\ell_2})\sim \cJ(\cH_{\ell_1})\times \cJ(\cH_{\ell_2})\times A\]
    for some abelian variety $A/\mathbb{Q}$ of dimension $(\ell_1-1)(\ell_2-1)/2$.

    Write $d=\ell_1\ell_2$. In characteristic $0$, this $A$ has complex multiplication with endomorphism algebra
    $\mathbb{Q}(i,\zeta_d+\zeta_d^{-1})$. Identifying $\mbox{\rm{Gal}}(\mathbb{Q}(i,\zeta_d+\zeta_d^{-1})/\mathbb{Q})$
    with $(\mathbb{Z}/4\mathbb{Z})^\times\times (\mathbb{Z}/d\mathbb{Z})^\times/(\pm 1)$,
    the corresponding CM type $\Phi$ is described by the following subset of this Galois group:
    \[
    \Phi=\left\{\left((-1)^{m+1},\pm m\right)\;:\; 1\leq m\leq (d-1)/2, \;\gcd(m,d)=1\right\}.
    \]
\end{lem}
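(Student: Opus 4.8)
The idea is to extend to $d=\ell_1\ell_2$ the covering--curve construction that for $d$ prime underlies \cite[Proposition~4]{TTV}, and to read off the new factor together with its CM type directly from that construction.

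First I would introduce an auxiliary curve. Substituting $x=s^2+s^{-2}$ into $y^2=\varphi_d(x)$ and using $\varphi_d(s^2+s^{-2})=s^{2d}+s^{-2d}$, one checks that the smooth projective curve $\cD_d\colon W^2=s^{4d}+1$ carries the degree-$4$ morphism $(s,W)\mapsto(s^2+s^{-2},\,W/s^{d})$ onto $\cH_d$. This morphism is Galois, with group $\langle\sigma^{2d}\tau,\,\rho\rangle\cong(\ZZ/2\ZZ)^2$, where $\sigma\colon s\mapsto\zeta_{4d}s$, $\tau\colon W\mapsto -W$ and $\rho\colon(s,W)\mapsto(1/s,\,W/s^{2d})$; hence $\cJ(\cH_d)$ is $\QQ$--isogenous to the $\langle\sigma^{2d}\tau,\rho\rangle$--invariant part of $\cJ(\cD_d)$.

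Next I would decompose $\cJ(\cD_d)$ under the $\mu_{4d}$--action generated by $\sigma$. On the regular differentials of $\cD_d$, which have basis $s^a\,ds/W$ with $0\le a\le 2d-2$, the automorphism $\sigma$ multiplies $s^a\,ds/W$ by $\zeta_{4d}^{a+1}$, so the characteristic polynomial of $\sigma$ on $H^1(\cD_d)$ equals $(X^{4d}-1)/(X^2-1)$, a product of cyclotomic polynomials $\Phi_e$ over the divisors $e\mid 4d$ with $e>2$. By Poincar\'e reducibility and classical CM theory this yields, over $\QQ(\zeta_{4d})$, an isogeny $\cJ(\cD_d)\sim\prod_{e\mid 4d,\,e>2}J_e$ with $J_e$ of complex multiplication by $\QQ(\zeta_e)$, of dimension $[\QQ(\zeta_e):\QQ]/2$, and with CM type determined by the exponents $a+1$ above. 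I would then compute the two involutions on these pieces. Since $\tau$ acts as $[-1]$ on $\cJ(\cD_d)$ and $\sigma^{2d}$ acts on $J_e$ as $\zeta_e^{2d}$, the involution $\sigma^{2d}\tau$ fixes $J_e$ exactly when $\zeta_e^{2d}=-1$, i.e.\ when $4d/e$ is odd; for $d=\ell_1\ell_2$ the surviving pieces are those with $e\in\{4,\,4\ell_1,\,4\ell_2,\,4d\}$. The relation $\rho\sigma\rho^{-1}=\sigma^{-1}\tau$ (note the extra $\tau$) shows that $\rho$ preserves each of these pieces; on $J_4$ it acts as $[-1]$ (visible on the single $1$--form $s^{d-1}\,ds/W$), so $J_4$ contributes nothing, while on $J_{4m}$ for $m\in\{\ell_1,\ell_2,d\}$ it acts through the field automorphism $\zeta_{4m}\mapsto\zeta_{4m}^{2m-1}$, whose fixed field---as $m$ is odd---is $\QQ(i,\zeta_m+\zeta_m^{-1})$. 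Therefore
\[
\cJ(\cH_d)\sim (J_{4\ell_1})^{\rho}\times(J_{4\ell_2})^{\rho}\times(J_{4d})^{\rho},
\]
and comparison with the case of prime degree identifies the first two factors with $\cJ(\cH_{\ell_1})$ and $\cJ(\cH_{\ell_2})$ (consistent with the morphisms $\cH_d\to\cH_{\ell_i}$), while $A:=(J_{4d})^{\rho}$ has complex multiplication by $\QQ(i,\zeta_d+\zeta_d^{-1})$ and dimension $[\QQ(\zeta_{4d}):\QQ]/4=(\ell_1-1)(\ell_2-1)/2$. Being the complement in $\cJ(\cH_d)$ of the $\QQ$--abelian subvariety $\cJ(\cH_{\ell_1})\times\cJ(\cH_{\ell_2})$, this $A$ is defined over $\QQ$.

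Finally I would compute the CM type of $A$ by restricting that of $J_{4d}$ along $\QQ(\zeta_{4d})\supset\QQ(i,\zeta_d+\zeta_d^{-1})$. The CM type of $J_{4d}$ corresponds, as above, to the exponent set $\{k:\,1\le k\le 2d-1,\ \gcd(k,4d)=1\}$; sending such a $k$ to $(k\bmod 4,\,\pm k\bmod d)\in(\ZZ/4\ZZ)^\times\times(\ZZ/d\ZZ)^\times/(\pm1)$, and using that $k$ and $2d-k$ lie in a single $\langle\zeta_{4d}\mapsto\zeta_{4d}^{2d-1}\rangle$--orbit and so restrict to the same embedding, one matches $k\bmod 4$ with the parity of $m:=\rep{k}$ and recovers precisely $\Phi=\{((-1)^{m+1},\pm m):1\le m\le(d-1)/2,\ \gcd(m,d)=1\}$. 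I expect the genuine work to be this last, purely combinatorial, bookkeeping together with keeping the group action straight; in particular the twist in $\rho\sigma\rho^{-1}=\sigma^{-1}\tau$ is what makes $(J_{4m})^{\rho}$ carry the CM field $\QQ(i,\zeta_m+\zeta_m^{-1})$ rather than the totally real field $\QQ(\zeta_{4m}+\zeta_{4m}^{-1})$. The choice of primitive $4d$-th root of unity only permutes $\Phi$ within its $\mathrm{Gal}(\QQ(i,\zeta_d+\zeta_d^{-1})/\QQ)$--orbit, which is immaterial for the later uses of the statement.
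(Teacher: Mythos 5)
Your overall strategy is sound and differs in its details from the paper's. The paper works with the degree-two cover $\cX_d\colon y^2=x^d+x^{-d}$ of $\cH_d$ and reads everything off from the action of the explicit commuting automorphisms $\iota\colon(x,y)\mapsto(-x,iy)$ and $\zeta\colon(x,y)\mapsto(\zeta_d x,y)$ on the basis $\omega_n=(x^{n-1}-x^{-n-1})\,dx/y$ of the invariant differentials: the three blocks $\{\omega_n:\ell_2\mid n\}$, $\{\omega_n:\ell_1\mid n\}$, $\{\omega_n:\gcd(n,d)=1\}$ give the three isogeny factors, and the CM type is immediate from $\iota^*\omega_n=(-1)^{n+1}i\,\omega_n$ and $(\zeta^*+(\zeta^{-1})^*)\omega_n=(\zeta_d^n+\zeta_d^{-n})\omega_n$. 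Your passage to the degree-four cover $\cD_d\colon W^2=s^{4d}+1$ and the isotypic decomposition of $\cJ(\cD_d)$ under $\mu_{4d}$ is a legitimate alternative: the Galois group of the cover, the relation $\rho\sigma\rho^{-1}=\sigma^{-1}\tau$, the selection of the surviving pieces $e\in\{4,4\ell_1,4\ell_2,4d\}$, the vanishing of $(J_4)^\rho$, and the dimension counts all check out, and the identification of $(J_{4\ell_i})^\rho$ with $\cJ(\cH_{\ell_i})$, though only sketched, follows by pulling back $H^0(\cH_{\ell_i},\Omega^1)$ and noting it lands in the eigenspaces with $\ell_{3-i}\mid a+1$.

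The final step, however, contains an error. Pulling back $\omega_n$ along $\cD_d\to\cX_d$, $(s,W)\mapsto(s^2,W/s^d)$, gives a multiple of $s^{d+2n-1}\,ds/W-s^{d-2n-1}\,ds/W$, so the exponent pair attached to $\omega_n$ is $\{d+2n,\,d-2n\}$, i.e.\ $k\equiv\pm 2n\bmod d$ rather than $\pm n$. Accordingly, your asserted matching of $k\bmod 4$ with the parity of $\rep{k}$ is false: for $d=15$ and $k=7$ one has $\rep{k}=7$ odd, so the stated $\Phi$ contains $(+1,\pm 7)$, whereas your recipe produces $(k\bmod 4,\pm k)=(-1,\pm 7)$. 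What your bookkeeping actually yields is the translate $(\pm1,\pm2)\cdot\Phi$ of the stated CM type (the sign in the first coordinate depending on $d\bmod 4$); the discrepancy comes from the fact that your generator $\sigma$ of the CM action is not the element implicitly fixed by the statement through the pair $\iota$, $\zeta+\zeta^{-1}$. As you note, a translate of $\Phi$ is equivalent for every later use in the paper, since the multiset of slopes $\#(\Phi\cap D_p\tau)/\#D_p$ over all cosets is translation-invariant, so the gap is harmless in practice; but to obtain the exact set $\Phi$ of the lemma you must normalize the identification of $\QQ(i,\zeta_d+\zeta_d^{-1})$ with its image in the endomorphism algebra, which your argument does not do.
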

\begin{proof}
Given $d>0$, denote by $\cX_d$ the (smooth, complete) hyperelliptic curve corresponding to
$y^2=x^d+x^{-d}$. We have a commutative diagram
\begin{figure}[H]
\centering
\includegraphics[width=0.36\textwidth]{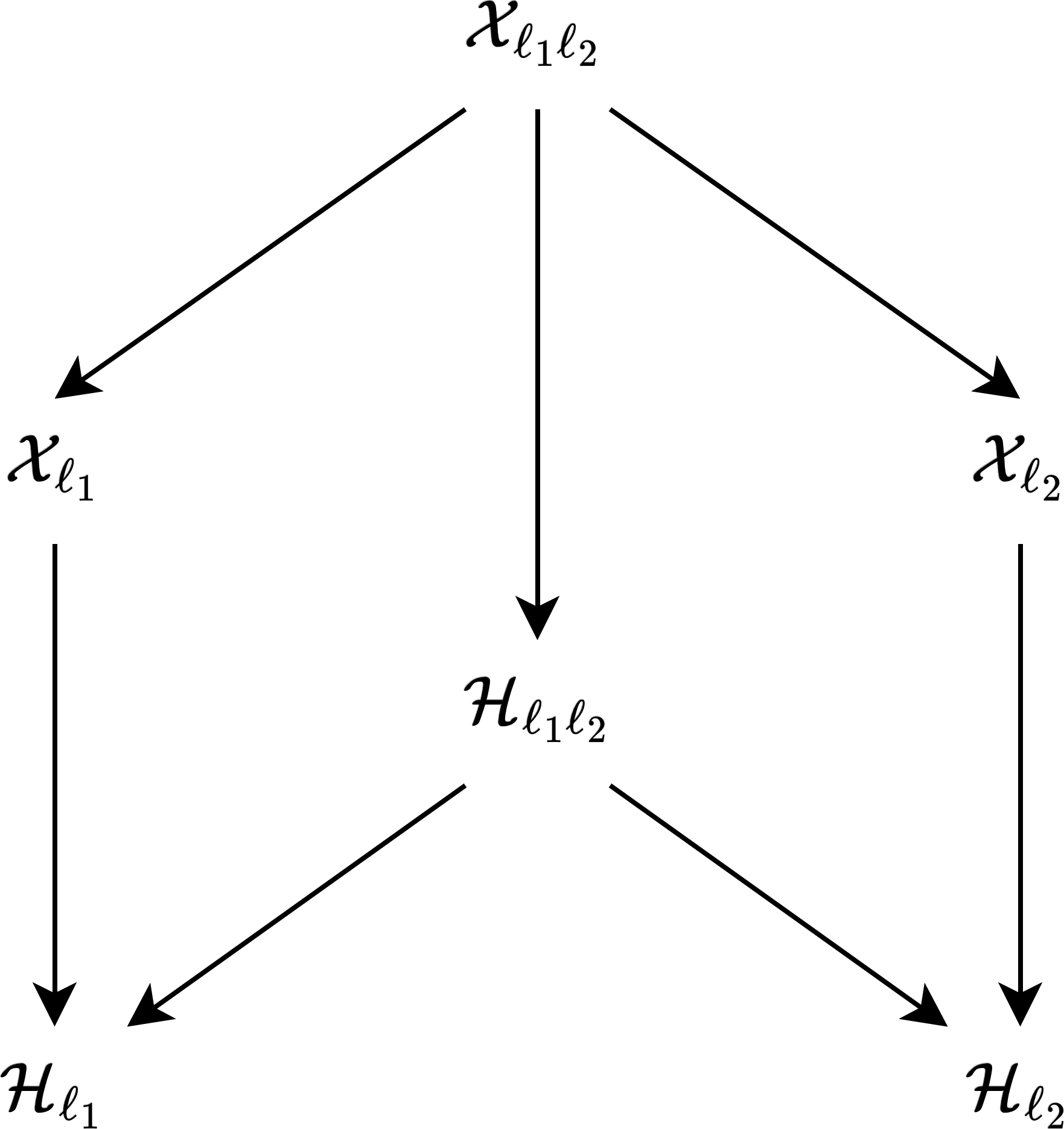}
\end{figure}
\noindent where the maps $\cX_{nm}\to \cX_m$ come from $(x,y)\mapsto (x^n,y)$ and
$\cX_n\to \cH_n$ from $(x,y)\mapsto (x+1/x,y)$ and
$\cH_{nm}\to\cH_m$ from $(x,y)\mapsto (\varphi_n(x),y)$.

We now reason in the spirit of \cite[\S3.1]{TTV}.
Define $\sigma,\zeta,\iota\in\text{Aut}(\cX_{\ell_1\ell_2})$ via
$\sigma(x,y)=(1/x,y)$ and $\zeta(x,y)=(\zeta_{\ell_1\ell_2}x,y)$ and
$\iota(x,y)=(-x,iy)$. Here $\zeta_n$ denotes a primitive $n$-th root
of unity and $i^2=-1$. Via pull-back, the spaces of regular $1$-forms on
$\cH_{\ell_j}$ and on $\cH_{\ell_1\ell_2}$ will be considered as
$\sigma$-invariant differentials on $\cX_{\ell_1\ell_2}$.
The latter space has basis
\[
\left\{ \omega_n:= (x^{n-1}-x^{-n-1})\frac{dx}{y}\;:\; 1\leq n \leq (\ell_1\ell_2-1)/2\right\}
\]
and it equals the space of
pull-backs of regular $1$-forms
on $\cH_{\ell_1\ell_2}$.
For $j=1,2$, the differentials related to $\cH_{\ell_j}$ are precisely those
spanned by
$\{\omega_n\colon \ell_{2-j}|n\}$.
Hence $\{\omega_n\colon \gcd(n,\ell_1\ell_2)=1\}$ spans a subspace
of (the pull-back of) $H^0(\cH_{\ell_1\ell_2},\Omega^1)$ complementary
to the differentials from both $\cH_{\ell_j}$.
Note that
\[ \iota^*\omega_n=(-1)^{n+1}i\cdot \omega_n\]
and
\[
\left(\zeta^*+(\zeta^{-1})^*\right)\omega_n=(\zeta_{\ell_1\ell_2}^n+\zeta_{\ell_1\ell_2}^{-n})\omega_n.
\]
Since $\text{span}\{\omega_n\colon \ell_1|n\}\cap \text{span}\{\omega_n\colon \ell_2|n\}=\{0\}$,
upto isogeny $\cJ(\cH_{\ell_1\ell_2})$
contains the product
$\cJ(\cH_{\ell_1})\times\cJ(\cH_{\ell_2})$. This results in the
asserted decomposition (defined over
$\mathbb{Q}$) of $\cJ(\cH_{\ell_1\ell_2})$,
with $\dim(A)=g(\cH_{\ell_1\ell_2})-g(\cH_{\ell_1})-g(\cH_{\ell_2})=(\ell_1-1)(\ell_2-1)/2$.

The action of
$\mathbb{Z}[\iota,\zeta+\zeta^{-1}]\subset \text{End}(\cJ(\cX_{\ell_1\ell_2}))$
fixes the parts coming from
$\cJ(\cH_{\ell_1}), \cJ(\cH_{\ell_2})$ and
$\cJ(\cH_{\ell_1\ell_2})$.
As a consequence, comparing
fixed subspaces, the abelian variety $A$ corresponds to
$\text{span}\{\omega_n\colon \gcd(n,\ell_1,\ell_2)=1\}.$
In terms of the standard
identification
\[ \text{Gal}(\mathbb{Q}(i,\zeta_{\ell_1\ell_2}+\zeta^{-1}_{\ell_1\ell_2})/\mathbb{Q})\cong
(\mathbb{Z}/4\ell_1\ell_2\mathbb{Z})^\times/\langle \overline{a}\rangle
\cong (\mathbb{Z}/4\mathbb{Z})^\times\times (\mathbb{Z}/\ell_1\ell_2\mathbb{Z})^\times/\langle\overline{\pm 1}\rangle
\]
(with $a\equiv 1\bmod 4$ and $a\equiv -1\bmod \ell_1\ell_2$),
the expressions for
$\iota^*\omega_n$ and 
$\left(\zeta^*+(\zeta^{-1})^*\right)\omega_n$ imply that
$A$ has CM with CM-field
$\mathbb{Q}(i,\zeta_{\ell_1\ell_2}+\zeta^{-1}_{\ell_1\ell_2})$,
and the corresponding CM type
is the asserted one.
\end{proof}

Lemma~\ref{ell1ell2} results in
an additional condition
for maximality of $\cH_{\ell_1\ell_2}$ over some
finite field of characteristic
$p\equiv 1\bmod 4$. The
conditions we have so far, are:
\begin{itemize}
    \item[(a)] $\cH_{\ell_1}$ and $\cH_{\ell_2}$ are maximal over some common
    $\FF_{p^n}$, so in particular $\ell_1,\ell_2\equiv 1\bmod 4$;
    \item[(b)] The CM abelian variety $A$ appearing in Lemma~\ref{ell1ell2} has
    supersingular reduction in characteristic $p$, in other words, the only slope of Frobenius at $p$ is $1/2$.
\end{itemize}
An immediate consequence of condition (a) is the following.
\begin{lem}\label{2val}
    If $\cH_{\ell_1}$ is maximal
    over $\FF_{p^{2m}}$ and $\cH_{\ell_2}$ is maximal
    over $\FF_{p^{2n}}$,
    then maximality of
    $\cH_{\ell_1\ell_2}$ in
    characteristic $p$ implies
    that $n$ and $m$ have equal $2$-adic valuation.
\end{lem}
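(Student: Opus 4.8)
The plan is to reduce the statement to a single observation about powers of Frobenius acting on the Jacobian of one of the curves $\cH_{\ell_j}$, and then to read off the $2$-adic valuation from the order of a Frobenius root divided by $\sqrt p$. The first, routine, ingredient is that maximality of $\cH_{\ell_1\ell_2}$ over some $\FF_{p^{2N}}$ forces both $\cH_{\ell_1}$ and $\cH_{\ell_2}$ to be maximal over that same field $\FF_{p^{2N}}$. Indeed, by Lemma~\ref{ell1ell2} the $\mathbb{Q}$-isogeny $\cJ(\cH_{\ell_1\ell_2})\sim \cJ(\cH_{\ell_1})\times \cJ(\cH_{\ell_2})\times A$ reduces modulo $p$ (a prime of good reduction for all varieties involved) to an $\FF_p$-isogeny; isogenous abelian varieties share the characteristic polynomial of Frobenius; and for an abelian variety $B/\FF_p$, maximality over $\FF_{p^{2N}}$ is equivalent to this polynomial being $(X+p^N)^{2\dim B}$. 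By unique factorisation in $\mathbb{Q}[X]$ the latter shape is inherited by every isogeny factor, which gives the claim.

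The heart of the proof is the following assertion: for a fixed abelian variety $B/\FF_p$ of positive dimension with $p$-Frobenius $\pi$, every element of $S_B:=\{a\in\mathbb{Z}_{>0}:\pi^{2a}=[-p^a]\}$ has the same $2$-adic valuation. To prove it, fix once and for all a root $\alpha$ of the characteristic polynomial of $\pi$, so $|\alpha|=\sqrt p$, and set $\xi:=\alpha/\sqrt p$; this complex number of absolute value $1$ does not depend on $a$. For $a\in S_B$, comparing the characteristic polynomial of $\pi^{2a}$ (whose roots are the $2a$-th powers of those of $\pi$) with that of $[-p^a]$ (namely $(X+p^a)^{2\dim B}$) forces $\alpha^{2a}=-p^a$, i.e.\ $\xi^{2a}=-1$. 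Hence $\xi^{4a}=1$, so $\xi$ is a root of unity of some exact order $e$, with $e\mid 4a$ and $e\nmid 2a$. An elementary check (via $\xi^{2a}=-1$ having order $2$, so $e=2\gcd(e,2a)$) then shows that $v_2(e)=v_2(a)+2$; since $e$ does not depend on $a$, the number $v_2(a)=v_2(e)-2$ is the same for all $a\in S_B$.

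Finally I would apply this twice: $m,N\in S_{\cJ(\cH_{\ell_1})}$ and this Jacobian is nonzero because $\ell_1\geq 3$, so $v_2(m)=v_2(N)$; similarly $v_2(n)=v_2(N)$ from $S_{\cJ(\cH_{\ell_2})}$; hence $v_2(m)=v_2(n)$, as asserted. I do not foresee a genuine obstacle here; the two points deserving a careful sentence are the passage from $\pi^{2a}=[-p^a]$ to $\alpha^{2a}=-p^a$ for each Frobenius root $\alpha$, and the $2$-adic bookkeeping $v_2(e)=v_2(a)+2$.
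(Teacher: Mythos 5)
Your proof is correct, and its skeleton is the same as the paper's: first pass from maximality of $\cH_{\ell_1\ell_2}$ over $\FF_{p^{2N}}$ to maximality of each $\cH_{\ell_j}$ over that same field, then observe that the exponents of maximality of a fixed curve all share one $2$-adic valuation. The paper's own proof is essentially a two-line assertion of these facts: it gets the first step from the coverings $\cH_{\ell_1\ell_2}\to\cH_{\ell_j}$ (you instead use the isogeny decomposition of Lemma~\ref{ell1ell2}; both are fine, and the covering route avoids having to say anything about reducing a $\QQ$-isogeny modulo $p$), and it gets the second step by implicitly invoking the pattern established in Section~\ref{primedeg}, where the set of maximality exponents of $\cH_\ell$ is identified as the set of odd multiples of a minimal exponent. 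What you add is a clean, self-contained and more general replacement for that second step: for any nonzero abelian variety $B/\FF_p$ the set $S_B=\{a:\pi^{2a}=[-p^a]\}$ has constant $2$-adic valuation, proved by normalizing one Frobenius eigenvalue to a root of unity $\xi$ of fixed order $e$ and checking $v_2(e)=v_2(a)+2$ for every $a\in S_B$. I verified the two delicate points you flagged: equality of endomorphisms forces equality of characteristic polynomials, hence $\alpha^{2a}=-p^a$ for every eigenvalue $\alpha$; and from $e=2\gcd(e,2a)$ one indeed gets $v_2(e)=v_2(2a)+1$. This general lemma does not depend on the particular curves and would be worth stating separately, since the same constancy of $v_2$ is used repeatedly elsewhere in the paper.
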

\begin{proof}
    Denote the $2$-adic valuation of a nonzero integer $a$ by $v_2(a)$.
    If $\cH_{\ell_1\ell_2}/\FF_{p^{2k}}$ is maximal,
    then so is $\cH_{\ell_1}$
    and therefore $v_2(k)=v_2(m)$.
    By the same argument $v_2(k)=v_2(n)$.
\end{proof}

\begin{example}{\rm
Suppose that $\ell_1,\ell_2\equiv 1\bmod 4$ are primes, with
the property that
maximality of $\cH_{\ell_j}$
in characteristic $p\equiv 1\bmod 4$ only occurs when
$p$ is a primitive root modulo
$\ell_j, j=1,2$.
This condition is satisfied, for
example, if the $\ell_j$
are as described in Cor.~\ref{corol}. As remarked at the end of Section~\ref{primedeg}, it also
holds for $\ell_1,\ell_2\leq 101$.
Combining Lemma~\ref{2val}
and Thm.~\ref{1,1} yields that
under these conditions,
a necessary condition for
maximality of $\cH_{\ell_1\ell_2}$ in
characteristic $p\equiv 1\bmod 4$ (besides $p$ being
a primitive root modulo $\ell_1$ and $\ell_2$) is that
$v_2(\ell_1-1)=v_2(\ell_2-1)$.

For all pairs $\ell_1\neq \ell_2$ satisfying these
conditions and $\ell_1,\ell_2\leq 101$,
we checked using Magma that
the condition mentioned in (b)
above does not hold for
any $p\equiv 1\bmod 4$ which
is a primitive root modulo $\ell_1,\ell_2$.
In other words, we did not find
any $\cH_d$ with $d$ divisible
by at least two distinct prime numbers, such that $\cH_d$ attains
maximality in some characteristic $p\equiv 1\bmod 4$.
}\end{example}
\begin{question}\label{secondquestion}
    Is it true that a necessary condition for maximality of $\cH_d$ for $d>2$  in some characteristic $p\equiv 1\bmod 4$,
    is that $d$ is a power of a prime $\equiv 1\bmod 4$?
\end{question}

\vspace{\baselineskip}
Next, curves $\cH_{\ell^n}$ are discussed. An analogue of
Lemma~\ref{ell1ell2} is as follows.
\begin{lem}\label{elln}
    Let $\ell$ be an odd prime number and $n\in\mathbb{Z}_{\geq 2}$. The map $\cH_{\ell^n}\to\cH_{\ell^{n-1}}$
     coming from $\varphi_{\ell^n}(x)=\varphi_{\ell^{n-1}}(\varphi_{\ell}(x))$ leads to a $\mathbb{Q}$-isogeny
    \[ \cJ(\cH_{\ell^n})\sim \cJ(\cH_{\ell^{n-1}})\times B\]
    for some abelian variety $B/\mathbb{Q}$ of dimension $\ell^{n-1}(\ell-1)/2$.

    Write $d=\ell^n$. In characteristic $0$, this $B$ has complex multiplication with endomorphism algebra
    $\mathbb{Q}(i,\zeta_d+\zeta_d^{-1})$. Identifying $\mbox{\rm{Gal}}(\mathbb{Q}(i,\zeta_d+\zeta_d^{-1})/\mathbb{Q})$
    with $(\mathbb{Z}/4\mathbb{Z})^\times\times (\mathbb{Z}/d\mathbb{Z})^\times/(\pm 1)$,
    the corresponding CM type $\Phi$ is described by the following subset of this Galois group:
    \[
    \Phi=\left\{\left((-1)^{m+1},\pm m\right)\;:\; 1\leq m\leq (d-1)/2, \;\gcd(m,d)=1\right\}.
    \]
\end{lem}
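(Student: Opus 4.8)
The plan is to imitate the proof of Lemma~\ref{ell1ell2} almost verbatim, with the factorization $\ell_1\ell_2$ replaced by $\ell^n$ and the role of $\{\ell_1,\ell_2\}$-coprimality played by coprimality to $\ell$. First I would reintroduce the auxiliary curve $\cX_d\colon y^2=x^d+x^{-d}$ together with the degree-two covering $\cX_d\to\cH_d$, $(x,y)\mapsto(x+1/x,y)$, whose deck transformation is $\sigma(x,y)=(1/x,y)$. The relevant instance of the commutative diagram used in Lemma~\ref{ell1ell2} is the square built from $\cX_{\ell^n}\to\cX_{\ell^{n-1}}$, $(x,y)\mapsto(x^\ell,y)$, and $\cH_{\ell^n}\to\cH_{\ell^{n-1}}$, $(x,y)\mapsto(\varphi_\ell(x),y)$, which commute because $\varphi_{\ell^n}=\varphi_{\ell^{n-1}}\circ\varphi_\ell$.

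Next I would record the basis $\omega_m:=(x^{m-1}-x^{-m-1})\,dx/y$, $1\le m\le(\ell^n-1)/2$, of the space of $\sigma$-invariant regular $1$-forms on $\cX_{\ell^n}$, which equals the pull-back of $H^0(\cH_{\ell^n},\Omega^1)$. A direct computation gives that the pull-back under $(x,y)\mapsto(x^\ell,y)$ of the form on $\cX_{\ell^{n-1}}$ indexed by $m$ equals $\ell\cdot\omega_{\ell m}$; hence the pull-back of $H^0(\cH_{\ell^{n-1}},\Omega^1)$ is precisely $\mathrm{span}\{\omega_j:\ell\mid j\}$ and a complementary subspace is $\mathrm{span}\{\omega_j:\gcd(j,\ell)=1\}$. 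The elementary index count $(\ell^n-1)/2-(\ell^{n-1}-1)/2=\ell^{n-1}(\ell-1)/2$ together with Poincar\'e reducibility then yields the $\mathbb{Q}$-isogeny $\cJ(\cH_{\ell^n})\sim\cJ(\cH_{\ell^{n-1}})\times B$ with $B$ of the claimed dimension, the complement being defined over $\mathbb{Q}$ since the covering is.

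For the CM structure I would introduce, as in Lemma~\ref{ell1ell2}, the automorphisms $\zeta(x,y)=(\zeta_{\ell^n}x,y)$ and $\iota(x,y)=(-x,iy)$ of $\cX_{\ell^n}$: both $\iota$ and the endomorphism $\zeta+\zeta^{-1}$ commute with $\sigma$, and both are compatible with the tower $\cX_{\ell^n}\to\cX_{\ell^{n-1}}$ (using $(-x)^\ell=-x^\ell$ and $\zeta_{\ell^n}^\ell=\zeta_{\ell^{n-1}}$). Since $\iota^*\omega_m=(-1)^{m+1}i\cdot\omega_m$ and $(\zeta^*+(\zeta^{-1})^*)\omega_m=(\zeta_{\ell^n}^m+\zeta_{\ell^n}^{-m})\omega_m$, the ring $\mathbb{Z}[\iota,\zeta+\zeta^{-1}]$ acts diagonally in the $\omega$-basis and hence preserves the subspace $\mathrm{span}\{\omega_m:\gcd(m,\ell)=1\}$ attached to $B$; as $\iota^2=-1$ on the Jacobian and the eigenvalues of $\zeta+\zeta^{-1}$ run once through the conjugates of $\zeta_{\ell^n}+\zeta_{\ell^n}^{-1}$, the endomorphism algebra of $B$ is $\mathbb{Q}(i,\zeta_d+\zeta_d^{-1})$ with $d=\ell^n$. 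Reading off, for each $m$, the eigenvalue of $\iota$ (giving the $(\mathbb{Z}/4\mathbb{Z})^\times$-component $(-1)^{m+1}$) and of $\zeta+\zeta^{-1}$ (giving the $(\mathbb{Z}/d\mathbb{Z})^\times/(\pm1)$-component $\pm m$) identifies the CM type with $\{((-1)^{m+1},\pm m):1\le m\le(d-1)/2,\ \gcd(m,d)=1\}$, where we use that $\gcd(m,\ell)=1\iff\gcd(m,\ell^n)=1$.

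I expect no serious obstacle here: since the argument runs parallel to Lemma~\ref{ell1ell2}, the only genuinely new verifications are the differential pull-back computation along the $\ell$-th power map and the observation that the multiples of $\ell$ in $\{1,\dots,(\ell^n-1)/2\}$ are exactly $\{\ell m:1\le m\le(\ell^{n-1}-1)/2\}$. The one point needing a line of care is that the $\mathbb{Z}[\iota,\zeta+\zeta^{-1}]$-action genuinely descends to $B$ and that $B$ is defined over $\mathbb{Q}$; both follow from the diagonal action in the $\omega$-basis and the $\mathbb{Q}$-rationality of the covering.
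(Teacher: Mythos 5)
Your argument is correct and follows the same route as the paper, which simply declares the proof ``similar to that of Lemma~\ref{ell1ell2}''; you have filled in exactly the details the paper omits (the pull-back computation along $x\mapsto x^\ell$, the index count, and the eigenvalue bookkeeping for $\iota$ and $\zeta+\zeta^{-1}$). Note that your identification of the forms coming from $\cH_{\ell^{n-1}}$ as $\mathrm{span}\{\omega_j : \ell\mid j\}$ is the correct one (it is what the dimension count and the stated CM type require); the paper's sketch writes $\ell^{n-1}\mid j$ at this point, which for $n\geq 3$ is a typo.
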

\begin{proof}
With notations as in Lemma~\ref{ell1ell2} and its
proof, one has the diagram
\begin{figure}[H]
\centering
\includegraphics[width=0.3\textwidth]{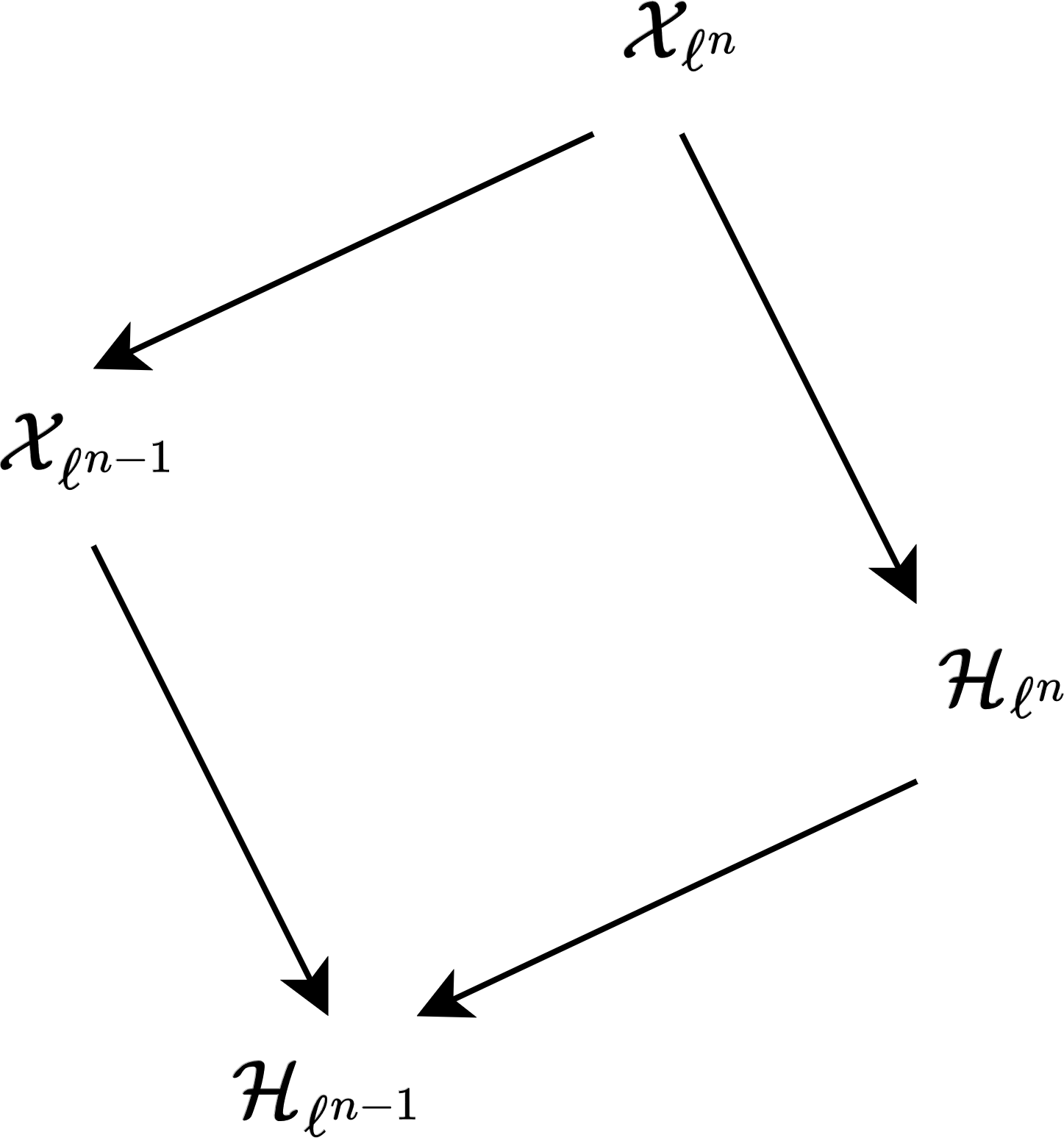}
\end{figure}
In $H^0(\cX_{\ell^n},\Omega^1)$
the $\sigma$-invariant differentials are those
coming from $\cH_{\ell^n}$
and they have as basis
\[
\left\{ \omega_n:= (x^{n-1}-x^{-n-1})\frac{dx}{y}\;:\; 1\leq n \leq (\ell^n-1)/2\right\}.
\]
The ones coming from $\cH_{\ell^{n-1}}$ are spanned
by $\{\omega_n\colon \ell^{n-1}|n\}$. The remainder
of the proof is similar to
that of Lemma~\ref{ell1ell2}.
\end{proof}

Note that the case $n=1$ (although excluded in the statement of the lemma) is
in fact the result of \cite[Prop.~4]{TTV}.
A difference between the current case and the one
described in Lemma~\ref{ell1ell2}, is that one can identify
cases such that $B$ has supersingular reduction:
\begin{pro}
With notations as in Lemma~\ref{elln},
if the prime $p\equiv 1\bmod 4$
generates $(\mathbb{Z}/\ell^n\mathbb{Z})^\times$
then $B$ has supersingular reduction at $p$.
\end{pro}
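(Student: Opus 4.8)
The plan is to mimic exactly the slope computation used in the proof of Theorem~\ref{promax}, now applied to the CM type $\Phi$ attached to $B$ in Lemma~\ref{elln}. Recall that the slopes of the $p$-Frobenius on (the reduction of) a CM abelian variety with CM field $K=\mathbb{Q}(i,\zeta_d+\zeta_d^{-1})$ and CM type $\Phi\subseteq\mathrm{Gal}(K/\mathbb{Q})$ are the fractions $\#(\Phi\cap D_p\tau)/\#D_p$, taken with multiplicity $[\mathrm{Gal}(K/\mathbb{Q}):D_p]$, as $\tau$ ranges over coset representatives of $D_p\backslash\mathrm{Gal}(K/\mathbb{Q})$. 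So $B$ has supersingular reduction at $p$ precisely when every such fraction equals $\tfrac12$, i.e.\ $\#(\Phi\cap D_p\tau)=\tfrac12\#D_p$ for all $\tau$.

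First I would pin down $D_p$ under the hypothesis. Writing $d=\ell^n$ and using $\mathrm{Gal}(K/\mathbb{Q})\cong(\mathbb{Z}/4\mathbb{Z})^\times\times(\mathbb{Z}/d\mathbb{Z})^\times/(\pm1)$, the decomposition group at $p$ is $D_p=\langle(p\bmod 4,\,\pm p)\rangle$. Since $p\equiv1\bmod4$ the first coordinate is trivial, so $D_p=\{1\}\times\langle\pm p\rangle$; and since $p$ generates $(\mathbb{Z}/d\mathbb{Z})^\times$, which has even order $\ell^{n-1}(\ell-1)$, the quotient $(\mathbb{Z}/d\mathbb{Z})^\times/(\pm1)$ is generated by $\pm p$ and has order $\ell^{n-1}(\ell-1)/2$. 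Thus $D_p=\{1\}\times(\mathbb{Z}/d\mathbb{Z})^\times/(\pm1)$, which has index $2$ in $\mathrm{Gal}(K/\mathbb{Q})$; a set of coset representatives is $\{(1,\pm1),(-1,\pm1)\}$. Hence there are exactly two slopes to check, coming from $\tau=(1,\pm1)$ (i.e.\ $\#(\Phi\cap D_p)$) and $\tau=(-1,\pm1)$ (i.e.\ $\#(\Phi\cap(-1,\pm1)D_p)$).

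Next I would evaluate these two intersections using the explicit description of $\Phi$ from Lemma~\ref{elln}: $(\varepsilon,\pm m)\in\Phi$ with $1\le m\le(d-1)/2$, $\gcd(m,d)=1$, iff $\varepsilon=(-1)^{m+1}$, i.e.\ the first coordinate is $+1$ exactly for odd $m$ and $-1$ exactly for even $m$. Since $D_p=\{1\}\times(\mathbb{Z}/d\mathbb{Z})^\times/(\pm1)$ contains \emph{all} residues coprime to $d$ in its second coordinate, $\Phi\cap D_p$ is the set of $(1,\pm m)\in\Phi$, i.e.\ those with $m$ odd, $1\le m\le (d-1)/2$, $\gcd(m,d)=1$; and $\Phi\cap(-1,\pm1)D_p$ is the set with $m$ even in the same range. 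It remains to count that the number of odd $m$ in $[1,(d-1)/2]$ coprime to $d=\ell^n$ equals the number of even such $m$; equivalently, each equals half of $\#\Phi=\dim B=\ell^{n-1}(\ell-1)/2$. Since $\ell$ is odd, $\gcd(m,d)=1$ is equivalent to $\ell\nmid m$, and the involution $m\mapsto (d-1)/2 - m +\tfrac12(\text{something})$—more simply, $m\mapsto 2\cdot\langle m\rangle$-type pairing—should swap parities; concretely I would use that the map sending the residue class $\pm m$ to $\pm 2m$ is a bijection of $(\mathbb{Z}/d\mathbb{Z})^\times/(\pm1)$, and track how it acts on the representative $\rep{m}\in[1,(d-1)/2]$ together with its parity, exactly as in Prop.~\ref{promin}. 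The main obstacle, and the only place requiring care, is this parity-counting lemma: showing that among the integers in $[1,(d-1)/2]$ coprime to $\ell^n$, exactly half are odd. I expect this follows by an elementary pairing argument (e.g.\ pair $a$ with $\ell^{n-1}\cdot(\text{unit})\pm a$ appropriately, or directly partition $[1,(\ell^n-1)/2]$ into length-$\ell$ blocks and observe the parity pattern within a block repeats with period governed by $\ell$ being odd), and once it is in place the proposition follows immediately from the slope formula recalled above.
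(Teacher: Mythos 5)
Your proposal is correct and follows essentially the same route as the paper, whose own proof is in fact even terser: it merely observes that $D_p=\{1\}\times(\mathbb{Z}/d\mathbb{Z})^\times/(\pm 1)$ and concludes that $\tfrac12$ is the only slope. The parity count you flag as the one remaining obstacle does go through: with $\ell\equiv 1\bmod 4$ (the only case relevant in this section, since otherwise $\#D_p$ is odd and no slope can equal $\tfrac12$), the interval $[1,(d-1)/2]$ contains equally many odd and even integers, and the multiples of $\ell$ in that interval are $\ell,2\ell,\dots,\tfrac{\ell^{n-1}-1}{2}\ell$, again equally many odd and even because $\ell$ is odd and $\tfrac{\ell^{n-1}-1}{2}$ is even; removing them leaves equally many odd and even $m$ coprime to $d$, so each of the two cosets of $D_p$ meets $\Phi$ in exactly $\#D_p/2$ elements.
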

\begin{proof} Put $d=\ell^n$.
    By assumption,
    in
    $\textrm{Gal}(\mathbb{Q}(i,\zeta_d+\zeta_d^{-1})/\mathbb{Q})\cong (\mathbb{Z}/4\mathbb{Z})^\times\times (\mathbb{Z}/d\mathbb{Z})^\times/(\pm 1)$ the
    decomposition group $D_p$
    at $p$ equals
    $\{1\}\times (\mathbb{Z}/d\mathbb{Z})^\times/(\pm 1)$. As a consequence,
    $1/2$ is the only slope of
    Frobenius at $p$, proving the claim.
\end{proof}

\begin{remark}{\rm 
A test with Magma (considering all primes $\ell\equiv 1\bmod 4$ upto $101$ and $n=2$,
as well as $\ell\leq 29$ and $n=3$) showed that in these
cases, the only primes $p\equiv 1\bmod 4$ such that
$\cH_\ell$ attains maximality
in characteristic $p$ and moreover $B$ has supersingular
reduction at $p$, are the ones
for which $p$ generates
$(\mathbb{Z}/\ell^n\mathbb{Z})^\times$.

If this holds in general,
then it would provide a necessary condition for
maximality of $\cH_{\ell^n}$
in characteristic $p\equiv 1\bmod 4$.
}\end{remark}

\begin{example}{\rm
 For $\ell=5$ and $n=2$
 and every $p\in\{13, 17, 37, 53, 73, 97, 233, 277\}$
 (primes $1\bmod 4$ ranging over the generators modulo $25$), Magma verifies that
 the numerator of the zeta-function of $\cH_{25}/\FF_p$
 equals $(p^2x^4+1)(p^{10}x^{20}+1)$. Hence here $\cH_{25}/\FF_{p^{20}}$
 is maximal.

 Taking $\ell=5$ and $n=3$,
 the prime $p=13$ generates the
 units modulo $125$.
 According to Magma, the numerator of the
 zeta-function of $\cH_{125}/\FF_{13}$ is
 $(p^2x^4+1)(p^{10}x^{20}+1)(p^{50}x^{100}+1)$.
 As a consequence, $\cH_{125}/\FF_{13^{100}}$ is maximal!
Magma code used for these cases is presented on p.~\pageref{p=13}.
 Although additional examples
 of this kind seem beyond the
 power of Magma, the ones
 presented here motivates the
 following, generalizing Questions~\ref{prime1mod4case} and \ref{secondquestion}.}\end{example}

\begin{question}\label{mainquestion}
Let $\ell\equiv 1\bmod 4$ be prime and $n\in\mathbb{Z}_{\geq 1}$ and $p\equiv 1\bmod 4$ a prime
generating $(\mathbb{Z}/\ell^n\mathbb{Z})^\times$; write $m=\ell^{n-1}(\ell-1)$ for the order of this group. 
Is it true that the curve $\cH_{\ell^n}$ is maximal over $\FF_{p^m}$ (and hence over all odd degree extensions
of the latter field)?

Moreover, suppose $\cH_d$ is maximal over some field of characteristic $p\nmid 2d$ (and $d\geq 3$), with $p\equiv 1\bmod 4$.
Is it true that $d=\ell^n$ for some prime $\ell\equiv 1\bmod 4$ and $n\in\mathbb{Z}_{\geq 1}$?
\end{question}
The results and described experiments in this text are consistent with positive answers to both questions.
As an example, in the case $n=1$, Thm.~\ref{promax} provides
a positive answer to the first question posed here.

\section*{Appendix: some Magma code}
In this section some of the used Magma code (see
\cite{Magma} for the language) is presented.

\vspace{\baselineskip}
The following code 
computes the curve $\cH_d$
and checks maximality over $\FF_q$ (provided $d,q$ are reasonably small).\label{p=13}

\vspace{\baselineskip}
\begin{lstlisting}
PZ<x>:=PolynomialRing(Integers());
cheb := function(d)
   ch1:=x; ch2:=x^2-2;
   for m in [3..d] do   
     ch:=x*ch2-ch1; ch1:=ch2; ch2:=ch; 
   end for;
   return ch;
end function;

F:=GF(13^20); PF<x>:=PolynomialRing(F);
H:=HyperellipticCurve(PF!cheb(25));
SerreBound(H) eq #H;

\end{lstlisting}

\vspace{\baselineskip}
It turns out that some ``larger'' examples can be verified using the numerator
of the zeta function of $\cH_d/\FF_p$. Here is an example.

\vspace{\baselineskip}
\begin{lstlisting}
p:=13; F:=GF(p); PF<x>:=PolynomialRing(F); 
H:=HyperellipticCurve(PF!cheb(125));
Numerator(ZetaFunction(H));   
\end{lstlisting}

\vspace{\baselineskip}
Executing the lines above outputs $(p^2x^4+1)(p^{10}x^{20}+1)(p^{50}x^{100}+1)$
in case $d=125, p=13$.
As a consequence, $\cH_{125}$ is maximal over $\FF_{13^n}$ if and only if
$n=100m$ with $m$ odd.

\vspace{\baselineskip}
The next code computes the slopes $\frac{\#\left(\Phi_\ell\cap D_p\cdot \tau\right)}{\#D_p}$
used in Section~\ref{primedeg}.
\label{PrimeMagma}

\vspace{\baselineskip}
\begin{lstlisting}
mg:=function(m,n) return Min(m mod n,n-(m mod n));end function;
slopes:=function(ell, p)
    Phi:={<(-1)^(n+1), mg(n,ell)> : n in [1..(ell-1)div 2]};
    ord:=Modorder(p,ell); sl:={};
    Dp:={<(-1)^(((p-1)div 2)*n), mg(p^n,ell)> : n in [1..ord]};
    for m in [1..(ell-1)div 2]
     do Dpmp:= { <x[1], mg(m*x[2],ell)> : x in Dp};
        Include(~sl, #{x: x in Dpmp | x in Phi}/#Dp);
        Dpmm:={ <-x[1], mg(m*x[2],ell)> : x in Dp};
        Include(~sl, #{x: x in Dpmm | x in Phi}/#Dp);
    end for;
    return sl;
end function;
\end{lstlisting}

  The next code verifies that
  the only characteristics $p\equiv 1\bmod 4$ such that
  $\cH_{89}$ attains maximality
  over extensions of $\FF_p$, are the ones for which
  $p$ is a primitive root modulo $89$:

\begin{lstlisting}
ell:=89;  
classes:={p : p in [1..4*ell-1] | Gcd(p,4*ell) eq 1};                        
for p in classes do
  if slopes(ell,p) eq {1/2} then
     if p mod 4 eq 1 then <p , Modorder(p,ell)>; end if; 
  end if;
end for;  
\end{lstlisting}

Below is code computing the slopes of Frobenius at $p$ for
the CM abelian varieties described in Lemma's \ref{ell1ell2}
and \ref{elln}.

\vspace{\baselineskip}
\begin{lstlisting}
slopes2 := function(d, p)
    enns:= {n : n in [1..(d-1)div 2] | Gcd(n,d) eq 1};
    Phi:= {<(-1)^(n+1), mg(n,d)> : n in enns};
    ord:= Modorder(p,d); sl:={};
    Dp:= {<(-1)^(((p-1)div 2)*n), mg(p^n,d)> : n in [1..ord]};
    for m in [1..(d-1)div 2] do 
      if Gcd(m,d) eq 1 then
        Dpmp:= { <x[1], mg(m*x[2],d)> : x in Dp};
        Include(~sl, #{x: x in Dpmp | x in Phi}/#Dp);
        Dpmm:= { <-x[1], mg(m*x[2],d)> : x in Dp};
        Include(~sl, #{x: x in Dpmm | x in Phi}/#Dp);
      end if;
    end for;
    return sl;
end function;
\end{lstlisting}

Finally, we present code used to check whether a given
$\cH_{\ell_1\ell_2}$ might attain maximality in some
characteristic $p\equiv 1\bmod 4$. Following the discussion
in \S\ref{1,1} it is assumed (verified) that a
necessary condition is that $p$ is a primitive root
modulo both $\ell_j$'s. For primes $p$ in the congruence classes
satisfying this, the code checks whether the abelian
variety $A$ described in Lemma~\ref{ell1ell2} has
supersingular reduction at $p$ (so, set of slopes of
Frobenius equal to $\{1/2\}$).
\begin{lstlisting}
primroots := function(p)
    a:=PrimitiveRoot(p);
    return {a^m mod p : m in [1..p-2] | Gcd(m,p-1) eq 1};
end function;
check := function(ell1, ell2)
  s1:=primroots(ell1); s2:=primroots(ell2); ss:={};
  cls:={ CRT([a,b,1], [ell1,ell2,4]) : a in s1, b in s2 };
  for p in cls do
    if slopes2(ell1*ell2,p)eq{1/2} then Include(~ss,p); end if;
  end for;
  return ss;
end function;
\end{lstlisting}
\normalsize{\textbf{Acknowledgment.} 
 \rm{We thank J.D.~Top for making the two diagrams used
 in \S\ref{subsec1mod4}. The first author was partially supported by FAPESP grant No. 2023/08271-5.}


\begin{thebibliography}{999}
\bibitem{Blake} C.~Blake,
A Deuring criterion for abelian varieties, \textit{Bull.\ London Math.\ Soc.} \textbf{46} (2014), 1256--1263. 
\bibitem{Magma}
W.~Bosma, J.~Cannon, and C.~Playoust, 
     {The {M}agma algebra system. {I}. {T}he user language},
    \textit{J. Symbolic Comput.},
 \textbf{24} (1997),
   235--265.
\bibitem{CH} P. Carbonne and T.  Henocq, D\'{e}composition de la jacobienne sur les corps finis,
   \textit{Bull. Polish Acad. Sci. Math.},
   \textbf{42},
   (1994), 207--215.
\bibitem{GSJ} G.~Dias, S.~Tafazolian and J.~Top, {On certain maximal curves related to Chebyshev polynomials}, \textit{Finite Fields Appl.} \textbf{101} (2025), Paper No. 102521, 21 pp.
\bibitem{Dickson} L.E.~Dickson, The Analytic Representation of Substitutions on a Power of a Prime Number of Letters with a Discussion of the Linear Group, \textit{Annals of Math.} \textbf{11} (1896/97), 65--120.
\bibitem{GK} M.~ Giulietti and G.~Korchm\'aros,  A new family of maximal curves over a finite field. \textit{Math. Ann.} \textbf{343} (2009),  229--245.
\bibitem{KW} T.~Kodama and T.~Washio,
A family of hyperelliptic function fields with Hasse-Witt-invariant zero,
\textit{J. Number Theory} \textbf{36} (1990),
187--200.

\bibitem{OZ} F. \"{O}zbudak, On maximal curves and linearized permutation polynomials over
              finite fields,
  \textit{J. Pure Appl. Algebra}, 
    \textbf{162}, (2001),
    87--102.
\bibitem{Schaefer} E.F.~Schaefer,
2-Descent on the Jacobians of
Hyperelliptic Curves,
\textit{J. Number Theory} \textbf{51}
(1995), 219--232.
\bibitem{Stoll} M.~Stoll, Implementing 2-descent for Jacobians
of hyperelliptic curves,
\textit{Acta Arith.} \textbf{98} (2001), 245--277.
\bibitem{Sugiyama} K. I.  Sugiyama, On a generalization of Deuring's results,  \textit{Finite Fields Appl.} \textbf{26} (2014),  69--85. 
\bibitem{Taf} S.~Tafazolian, A note on certain maximal hyperelliptic curves, \textit{Finite Fields Appl.} \textbf{16}
(2012), 1013--1016.
\bibitem{TT} S. Tafazolian and J. Top, On certain maximal hyperelliptic curves 
related to Chebyshev polynomials,  \textit{J. Number Theory} \textbf{203} (2019), 276--293.
\bibitem{T}J. Tate, { Endomorphisms of abelian varieties over finite fields}, \textit{Invent. Math.} \textbf{2} (1966), 134--144.
\bibitem{TS} J.T.~Tate and I.R.~\v{S}afarevi\v{c}, {The rank of elliptic curves}, \textit{Dokl. Akad. Nauk SSSR},
\textbf{175} (1967), {770--773}.
\bibitem{TTV} W.~Tautz, J.~Top and A.~Verberkmoes,
Explicit hyperelliptic curves with real multiplication
 and permutation polynomials,
 \textit{Canad. J.\ of Math.} \textbf{43} (1991),
 1055--1064.

\end{thebibliography}
\end{document}